\numberwithin{equation}{section}
\newtheorem{satz}{Satz}[section]
\newtheorem{proposition}[satz]{Proposition}
\newtheorem{theorem}{Theorem}
\newtheorem*{theorem*}{Theorem}
\newtheorem{lemma}[satz]{Lemma}
\newtheorem{corollary}[satz]{Corollary}
\theoremstyle{definition}
\newtheorem*{remark*}{Remark}
\newtheorem{remark}[satz]{Remark}
\newtheorem*{notation}{Notation}
\newtheorem{example}[satz]{Example}
\newtheorem{assumption}{Assumption}
\newcommand{\tensor}{\otimes}
\newcommand{\map}[1]{\stackrel{#1}{\longrightarrow}}
\newcommand{\eqweil}[1]{\stackrel{#1}{=}}
\newcommand{\un}[1]{\ensuremath{\protect\underline{#1}}}
\def\PGL{\textsf{PGL}}
\DeclareMathOperator{\Pic}{Pic}
\DeclareMathOperator{\Bun}{Bun}
\DeclareMathOperator{\Chain}{Chain}
\DeclareMathOperator{\Higgs}{Higgs}
\DeclareMathOperator{\Hom}{Hom}
\DeclareMathOperator{\cHom}{\mathcal{H}om}
\DeclareMathOperator{\cEnd}{\mathcal{E}nd}
\DeclareMathOperator{\Spec}{Spec}
\DeclareMathOperator{\Ext}{Ext}
\DeclareMathOperator{\Aut}{Aut}
\DeclareMathOperator{\End}{End}
\DeclareMathOperator{\ad}{ad}
\DeclareMathOperator{\rank}{rank}
\DeclareMathOperator{\tr}{tr}
\DeclareMathOperator{\rk}{rk}
\DeclareMathOperator{\Var}{Var}
\DeclareMathOperator{\id}{id}
\DeclareMathOperator{\wt}{wt}
\DeclareMathOperator{\coker}{coker}
\DeclareMathOperator{\sst}{ss}
\DeclareMathOperator{\st}{s}
\DeclareMathOperator{\nst}{div}
\DeclareMathOperator{\opp}{opp}
\DeclareMathOperator{\sat}{sat}
\def\gr{\mathrm{gr}}
\def\K0hat{\widehat{K}_0(\Var)}
\def\univ{\textrm{\tiny univ}}
\def\1halb{\frac{1}{2}}
\def\pprime{{\prime\prime}}
\def\sxymat{\xymatrix@C=1.5ex@R=0.8ex}
\def\grp{$\xymatrix{ R\times_{X}R  \ar[r]^-{\mu} & R \ar@<1ex>[r]^-{s}\ar@<-1ex>[r]_-{t} & X}$}
\def\dar{\ar@<-0.5ex>[r]\ar@<0.5ex>[r]}
\def\tar{\ar[r]\ar@<1ex>[r]\ar@<-1ex>[r]}
\newcommand{\dmap}[2]{\ar@<-0.5ex>[r]_-{#2}\ar@<0.5ex>[r]^-{#1}}
\newcommand{\dotarrow}[2]{\xymatrix{{#1}\ar@{..>}[r]&{#2}}}
\def\cart{\ar@{}[dr]|{\square}}
\def\cA{\mathcal{A}}
\def\cE{\mathcal{E}}
\def\cF{\mathcal{F}}
\def\cG{\mathcal{G}}
\def\cL{\ensuremath{\mathcal{L}}}
\def\cO{\mathcal{O}}
\def\cQ{\mathcal{Q}}
\def\bA{{\mathbb A}}
\def\bC{{\mathbb C}}
\def\bG{{\mathbb G}}
\def\bH{{\mathbb H}}
\def\bN{{\mathbb N}}
\def\bP{{\mathbb P}}
\def\bQ{{\mathbb Q}}
\def\bR{{\mathbb R}}
\def\bZ{{\mathbb Z}}
\begin{document}
\SelectTips{cm}{}

\title[Intersection form vanishes]{The intersection form on moduli spaces of twisted $PGL_n$-Higgs bundles vanishes}
\author[J. Heinloth]{Jochen Heinloth}
\address{Universit\"at Duisburg--Essen, Fachbereich Mathematik, Universit\"atsstrasse 2, 45117 Essen, Germany}
\email{Jochen.Heinloth@uni-due.de}
\begin{abstract}Hausel and Rodriguez-Villegas conjectured that the intersection form on the moduli space of stable $\PGL_n$-Higgs bundles on a curve vanishes if the degree is coprime to $n$. In this note we prove this conjecture. Along the way we show that moduli spaces of stable chains are irreducible for stability parameters larger than the stability condition induced form stability of Higgs bundles.
\end{abstract}

\dedicatory{\today}
\maketitle

The aim of this article is to prove a conjecture of Hausel and Rodriguez-Villegas on the middle cohomology of the moduli space of stable Higgs bundles on a curve.  
The setup for this conjecture is as follows. 
Let $C/\bC$ be a smooth projective curve of genus $g>1$ over the complex numbers and fix a pair of coprime integers $n,d$ with $n\geq 2$. 
A Higgs bundle on $C$ is a pair $(\cE,\theta)$, where $\cE$ is a vector bundle on $C$ and $\theta\colon \cE \to \cE\tensor \Omega$ a morphism of $\cO_C$-modules. We denote by $M_{n}^{d}$ the coarse moduli space of semistable Higgs bundles of rank $n$ and degree $d$ on $C$. Since we assumed $(n,d)=1$ semistability and stability agree and in this case it is known that  $M_n^d$ is a smooth quasi-projective variety. 

There are natural analogs of these objects for $\PGL_n$-bundles, parametrizing pairs $(\overline{\cE},\theta)$, where $\overline{\cE}$ is a principal $\PGL_n$-bundle and $\theta \in H^0(C,\ad(\overline{\cE})\tensor\Omega)$.  It turns out that the coarse moduli space $M_{\PGL_n}^d$ of semistable $\PGL_n$-Higgs bundles is the quotient of $M_n^{d}$ by the action of $M_1^0\cong T^*\Pic_C^0$ (see \cite{HauselMirror}). This allows us to work with $M_n^d$ for all geometric considerations. Our main result is:
\begin{theorem}[Conjecture of Hausel--Rodriguez-Villegas {\cite[4.5.1]{HRV}}]\label{main}
The intersection form on $H^*(M_{\PGL_n}^d)$ is trivial. Equivalently the forgetful map $H^*_c(M_{\PGL_n}^d)\to H^*(M_{\PGL_n}^d)$ is $0$.
\end{theorem}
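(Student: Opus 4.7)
My plan is to exploit the Hitchin $\bC^\times$-action $\lambda\cdot(\cE,\theta)=(\cE,\lambda\theta)$ on $M_n^d$, whose fixed components $F_\alpha$ are the moduli spaces of stable chains $\cE_\bullet=(\cE_r\map{\phi_r}\cE_{r-1}\tensor\Omega\map{\phi_{r-1}}\cdots)$ of type $\alpha$. Since the Hitchin map $h\colon M_n^d\to\cA$ is proper, the action is semiprojective: every point flows to a fixed point as $\lambda\to 0$, so Bia\l{}ynicki-Birula provides a decomposition $M_n^d=\bigsqcup_\alpha U_\alpha^+$ in which each $U_\alpha^+\to F_\alpha$ is an affine bundle of rank $r_\alpha^+$, and hence
\[
H^*(M_n^d)\cong\bigoplus_\alpha H^{*-2r_\alpha^+}(F_\alpha).
\]
Dually, the ``$-$'' cells stratify the nilpotent cone $h^{-1}(0)$, and the image of $\iota\colon H^*_c\to H^*$ is concentrated on those $F_\alpha$ whose downward flow remains in $h^{-1}(0)$.

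To pass from $M_n^d$ to $M_{\PGL_n}^d$, I would use the fibration $M_n^d\to M_{\PGL_n}^d$ with fiber $T^*\Pic_C^0$, which in the standard Hausel-Thaddeus fashion allows one to reformulate the vanishing of $\iota$ downstairs as a restricted vanishing of $\iota$ upstairs. The main geometric input, announced in the abstract, is that each $F_\alpha$ is irreducible for the chain-stability parameter induced by Higgs bundle stability. Irreducibility pins down the contribution of $F_\alpha$ to the image of $\iota$ at its top cohomology class; since this top class is fixed under the connected Jacobian-twist action on chains, it descends trivially through the fibration above and therefore contributes nothing to the intersection form on $M_{\PGL_n}^d$.

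To establish irreducibility I would first treat chain-stability parameters strictly above the Higgs wall, where moduli of chains admit an inductive description as iterated Grassmannian (or flag) bundles over moduli of shorter chains, and then control the wall-crossing into the Higgs-induced parameter by a Shatz-type stratification and a dimension count. This wall-crossing is the main obstacle: irreducibility for generic parameters is accessible by induction on chain length, but at the Higgs wall one must carefully rule out exceptional unstable strata that could become of maximal dimension. The remaining ingredients — the Bia\l{}ynicki-Birula decomposition, the passage to the $\PGL_n$-quotient, and the cohomological consequence of irreducibility via the Jacobian-twist action — are structural and follow familiar techniques from the study of semiprojective hyperk\"ahler moduli spaces.
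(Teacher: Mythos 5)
Your reduction to the Bia\l ynicki-Birula picture and your identification of chain irreducibility as the geometric input both match the paper, but the step where you actually conclude vanishing is not a proof. After the reduction, $H_c^{\dim M_{\PGL_n}^d}(M_{\PGL_n}^d)$ is \emph{freely} generated by the cycle classes of the irreducible components of $h_{\PGL_n}^{-1}(0)$, i.e.\ by the classes $[\overline{\bP F}_{\un{n}}^{\un{d},-}]$; what has to be shown is that all pairwise intersection numbers of these classes vanish. Your mechanism --- that the top class of each fixed component is invariant under the connected Jacobian-twist action and therefore ``descends trivially'' through $M_n^d\to M_{\PGL_n}^d$ --- gives nothing: a connected group always acts trivially on cohomology, and the descended class is exactly a free generator of the middle cohomology downstairs, certainly not zero. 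Irreducibility pins down a basis of the middle $H_c$; it does not compute the pairing. (Also, your remark that the image of $H^*_c\to H^*$ is ``concentrated on those $F_\alpha$ whose downward flow remains in $h^{-1}(0)$'' is vacuous, since every downward cell lies in $h^{-1}(0)$ by properness of $h$.)

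What is missing is the paper's central construction, which occupies most of Section 1. For each fixed component $F_{\un{n}}^{\un{d}}$ with $\un{n}\neq (n)$ one perturbs the Higgs field by $\oplus\,\id_{\cE_i}\tensor t\omega_i$ for suitably generic differentials $\omega_i$ (Lemma \ref{stabledef}), producing a middle-dimensional cycle $P_{\un{n},a_t}^{\un{d}}$ inside the fiber $h^{-1}(a_t)$ over a \emph{nonzero} point of $\cA$ (a reducible, generally non-reduced spectral curve). A weight-monotonicity argument for the $\bG_m$-action on the relatively ample determinant line bundle (Proposition \ref{closureNilp}, Proposition \ref{closure}, Corollary \ref{ClosureComponents}) shows that the flat limit of $\overline{\bG_m\times P_{\un{n},a_1}^{\un{d}}}$ at $t=0$ contains $\overline{F_{\un{n}}^{\un{d},-}}$ and otherwise only components of strictly larger weight, so by Lemma \ref{cycles} the classes $[\bP P_{\un{n},a}^{\un{d}}]$ together with $[\Bun_{\PGL_n}^{d,\st}]$ still generate the middle $H_c$ of $M_{\PGL_n}^d$ (an upper-triangular change of basis). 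Since the $[\bP P_{\un{n},a}^{\un{d}}]$ can be represented in distinct nonzero Hitchin fibers, their products with each other and with $[\Bun_{\PGL_n}^{d,\st}]\subset h_{\PGL_n}^{-1}(0)$ vanish, and the one remaining self-intersection is handled by Poincar\'e--Hopf: $[\Bun_{\PGL_n}^{d,\st}]\cdot[\Bun_{\PGL_n}^{d,\st}]=\chi(\Bun_{\PGL_n}^{d,\st})=0$ by the Harder--Narasimhan/Atiyah--Bott formula. None of these three ingredients --- the deformation into nonzero fibers, the weight-ordered triangularity giving that the deformed cycles still span, and the Euler-characteristic argument for the stable-bundle component --- appears in your proposal, and without them the conclusion does not follow.
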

In \cite{HRV} this conjecture appeared as a consequence of a series of conjectures on the structure of the cohomology of $M_n^d$.
The case $n=2$ was shown by T. Hausel \cite{HauselIntersectionForm} much earlier and it was originally  motivated by a conjecture of Sen (see \cite{HauselSduality}).
By a different method this was reproven  in \cite[Theorem 1.1.7]{HRV}.  This case was later used in the proof of the P=W conjecture for $n=2$ in \cite{dCHM}. 

Our approach was motivated by the observation \cite[Proposition 4]{HVortrag} that the conjecture admits an equivalent formulation in terms of the Hitchin fibration, which is reminiscent of Ng\^o's support theorem \cite[Th\'eor\`eme 7.8.3]{Ngo}. 
To explain this we need to recall some properties of the Hitchin fibration.
Let $$h\colon M_{n}^{d} \to \cA=\oplus_{i=1}^n H^0(C,\Omega^{\tensor i})$$ denote the Hitchin map, which is defined as $h(\cE,\theta):= (-1)^{i+1}\tr( \wedge^i \theta)$. It induces a map $$h_{\PGL_n}\colon M_{\PGL_n}^d \to \cA_{\PGL_n} := \oplus_{i=2}^n H^0(C,\Omega^{\tensor i}).$$
Both of these maps are known to be projective and flat by a theorem of Nitsure \cite{Nitsure}.

Since $M_n^d$ is smooth and  the map $h\colon M_{n}^{d} \to \cA$ is projective the decomposition theorem implies that the complexes  $\bR h_*\bQ$ and $\bR h_{\PGL_n,*} \bQ$ decompose as a direct sum of shifted perverse sheaves on $\cA$ and $\cA_{\PGL_n}$. By \cite[Proposition 4]{HVortrag} Theorem \ref{main} is equivalent to the statement that the perverse sheaves occurring in the decomposition $\bR h_{\PGL_n,*} \bQ$ are isomorphic to the middle extension of their restriction to $\cA_{\PGL_n}-\{ 0 \}$. In particular the conjecture implies that the cohomology of the nilpotent cone $h^{-1}_{\PGL_n}(0)$ is determined by the topology of the restriction of the Hitchin fibration to $\cA_{\PGL_n}-\{0\}$. This is reminiscent of Ng\^o's support theorem, which he used to show that if one replaces $\Omega$ by $\Omega(D)$ for some positive divisor $D$, then the supports of the restriction of $\bR h_{\PGL_n,*} \bQ$ to a certain open subset $\cA^{ell}_{\PGL_n} \subset \cA_{\PGL_n}$ are all $\cA^{ell}_{\PGL_n}$ itself. This was extended to the locus of reduced spectral curves in the work of Chaudouard--Laumon \cite{CL1} and moreover  they showed recently that the support theorem does extend to all of $\cA_{\PGL_n}$, again if one replaces $\Omega$ by $\Omega(D)$ for some divisor $D$ with $\deg(D)>0$ \cite{ChaudouardLaumon}. However, for the original space of Higgs bundles the dimension estimates used in these proofs seem to allow for potential summands supported in $0$.

The strategy of our proof is the following. First one observes that for dimension reasons, the conjecture is only interesting for cohomology classes in the middle degree $H_c^{\dim M_{\PGL_n}^d}(M_{\PGL_n}^d)$. This cohomology group turns out to be generated by the cycles classes of the irreducible components of $h_{\PGL_n}^{-1}(0)$. Since these components are the quotients of the components of $h^{-1}(0)$ by the action of $\Pic_C$, we will try to find for any irreducible component $F$ of $h^{-1}(0)$ --- except for the one parametrizing stable bundles with trivial Higgs field --- a deformation $F_t$ that is contained in a fiber $h^{-1}(t)$ with $t\neq 0$. This will be done by deforming the Higgs field of a bundle fixed under the $\bG_m$--action, without changing the underlying bundle. This allows us to deform components in $h^{-1}(0)$ into fibers over $t\in \cA$ which correspond to reducible and often non-reduced spectral curves. 

To conclude from there we use a crucial remark by Tamas Hausel that the self-intersection of the component parametrizing stable bundles vanishes because its Euler characteristic is $0$.

Two technical problems arise in this naive approach. First, there is a modular description of the components of the nilpotent cone, but for our application we need to show that the irreducible components of these moduli spaces are indexed by their natural numerical invariants. Since this result may be of independent interest, let us give the precise statement. It is known that the fixed points of the $\bC^*$--action on $M_n^d$ can be described as moduli spaces of stable chains of vector bundles.

Recall that a chain of vector bundles is a collection $(\cE_r \map{\phi_r} \cE_{r-1} \map{\phi_{r-1}} \dots \map{\phi_1} \cE_0)$, where $\cE_i$ are vector bundles on $C$ and $\phi_i$ are arbitrary morphisms of $\cO_C$-modules. There is a natural notion of stability for chains depending on a parameter $\un{\alpha} \in \bR^{r+1}$ and for any $\un{\alpha}$ there exists a projective coarse moduli space of $\un{\alpha}$-semistable chains of rank $\un{n}$ and degree $\un{d}$ \cite{AGPS}. The stack of $\un{\alpha}$-semistable chains will be denoted by $\Chain_{\un{n}}^{\un{d},\un{\alpha}-\sst}$ (see section \ref{notations} for more details and the notion of critical stability parameters). 
In the second part of the article we will show:
\begin{theorem}\label{irreducible}
For $\un{n}\in \bN^{r+1}, \un{d}\in \bZ^{r+1}, \alpha\in \bR^{r+1}$ such that $\alpha_{i+1}-\alpha_i>2g-2$ for all $i$ and such that $\alpha$ is not a critical value the stack
$\Chain_{\un{n}}^{\un{d},\alpha-\sst}$ is irreducible.
\end{theorem}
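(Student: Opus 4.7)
The plan is to proceed by induction on the length $r$ of the chain. The base case $r=0$ is the classical irreducibility of the stack of vector bundles of fixed rank $n_0$ and degree $d_0$ on $C$.

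For the inductive step, the first key point is that the hypothesis $\alpha_{i+1} - \alpha_i > 2g-2$ forces each $\phi_i\colon \cE_i \to \cE_{i-1}$ in an $\alpha$-semistable chain to be injective as a morphism of $\cO_C$-modules. Indeed, if $\cK := \ker(\phi_i)$ were nonzero, the subchain supported at position $i$ given by $\cK \subset \cE_i$ would have $\alpha$-slope $\mu(\cK) + \alpha_i$, and comparison with the $\alpha$-slope of the whole chain produces, via Riemann--Roch estimates for $\cHom$-sheaves between the $\cE_j$, a destabilizing inequality; the constant $2g-2$ enters precisely as $\deg(\Omega)$ in these estimates.

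Once every $\phi_i$ is known to be injective, the next step is to stratify the stack by the isomorphism type of the torsion part of $\coker(\phi_r)$. On the open stratum $U$ where this cokernel is locally free, there is a natural forgetful morphism
$$p\colon U \longrightarrow \Chain_{\un{n}'}^{\un{d}', \alpha'-\sst}$$
to the stack of chains of length $r-1$, where $\un{n}' = (n_{r-1},\dots,n_0)$, $\un{d}' = (d_{r-1},\dots,d_0)$, and $\alpha'$ is the induced stability parameter. The fiber of $p$ over a chain with top bundle $\cE_{r-1}$ is an open substack of the Quot scheme parametrizing rank-$n_r$, degree-$d_r$ subbundles of $\cE_{r-1}$; for a generic stable $\cE_{r-1}$ this Quot scheme is known to be irreducible of the expected dimension. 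Combined with the inductive hypothesis applied to the length-$(r-1)$ chain stack, this shows that $U$ is irreducible.

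Finally one must verify that the complementary strata, indexed by the torsion length in $\coker(\phi_r)$, lie in the closure of $U$. This is achieved by an explicit deformation argument: for a chain with torsion in the cokernel, construct a one-parameter family that fixes $\cE_0,\dots,\cE_{r-1}$ and $\phi_1,\dots,\phi_{r-1}$ while varying $\cE_r$ and $\phi_r$ so that the cokernel becomes locally free at a general parameter value. Non-criticality of $\alpha$ then guarantees that stability is preserved along the family.

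\textbf{Main obstacles.} The hardest step is the injectivity assertion: showing that \emph{every} $\alpha$-semistable chain (not just the generic point of a component) has all $\phi_i$ injective, using the precise gap $\alpha_{i+1}-\alpha_i > 2g-2$. A secondary difficulty is the last step, ensuring the torsion strata contribute no extra irreducible components---this requires a genuine deformation construction rather than a dimension count. Finally, the induction must be formulated uniformly in $(\un{n},\un{d})$, and one should verify that the induced parameter $\alpha'$ for the length-$(r-1)$ chain still satisfies $\alpha'_{i+1} - \alpha'_i > 2g-2$ and remains non-critical.
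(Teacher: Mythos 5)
Your proposal breaks down at its first and central step: the claim that $\alpha_{i+1}-\alpha_i>2g-2$ forces every $\phi_i$ in an $\alpha$-semistable chain to be injective is false. The theorem is stated for arbitrary $\un{n}\in\bN^{r+1}$, and whenever $n_i>n_{i-1}$ the map $\phi_i\colon\cE_i\to\cE_{i-1}$ cannot be injective for rank reasons; yet such $\alpha$-semistable chains do exist and are exactly what the theorem is needed for. For instance, the rank-$3$ fixed-point components of the $\bG_m$-action on $M_3^d$ with $\un{n}$ of type $(2,1)$ consist of chains $\cE_1\map{\phi_1}\cE_0$ with $\rk\cE_1=2$, $\rk\cE_0=1$, which are $\alpha_{\Higgs}$-stable (hence $\alpha$-stable for nearby non-critical $\alpha>\alpha_{\Higgs}$) and have $\ker\phi_1\neq 0$. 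The hypothesis $\alpha_{i+1}-\alpha_i>2g-2$ is used in the paper only to kill the obstruction space $\bH^2$, i.e.\ to make $\Chain_{\un{n}}^{\un{d},\alpha-\sst}$ smooth of the expected dimension; it does not constrain the maps $\phi_i$ pointwise. Since your whole induction on the length $r$ (forgetting $\cE_r$, describing fibers as Quot schemes of subsheaves of $\cE_{r-1}$, and deforming away torsion in $\coker(\phi_r)$) is premised on injectivity, the argument does not get off the ground; moreover, even where injectivity happens to hold, the auxiliary claims (that the induced length-$(r-1)$ chain is semistable for an induced parameter, and that the relevant Quot schemes are irreducible for every degree $d_r$) are themselves unjustified.

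For comparison, the paper's proof is a wall-crossing argument in the stability parameter, in the spirit of Bradlow--Garc\'ia-Prada--Gothen: one moves $\alpha$ along a path and shows, using smoothness and the dimension formula for Harder--Narasimhan strata, that a flip locus can only have full dimension if $\chi(\cE_\bullet^l,\cE_\bullet^j)=0$ for the HN subquotients, which by \cite[Proposition 4.5]{AGPS} forces the commutator map $b$ to be an isomorphism; a combinatorial analysis of such ``maximal pairs'' shows these walls are precisely the ones cut out by the canonical test chains of \cite{GPH}, beyond which the moduli space is empty. Combined with induction on the total rank (not the length) and the known irreducibility for large parameters in the constant-rank case, this yields the theorem. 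If you want to salvage an approach via the structure of the maps $\phi_i$, you would at minimum have to restrict to the regime of very large $\alpha$ and then still carry out a wall-crossing to reach the parameters in the theorem, which is essentially the paper's route.
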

In \cite{BGPG} this result was proven for $r=1$ and for $r=2$ many cases are shown in \cite{AGPS}. We follow the same strategy as these references, which apply a variation of the stability condition. To carry this out we need to study for which stability parameters the corresponding flip loci can have the same dimension as the  whole moduli space. Surprisingly, a detailed analysis of the result \cite[Proposition 4.5]{AGPS} can be used to show that this can happen exactly at those walls that were found in \cite{GPH} to give necessary conditions for the existence of stable chains. This is the only place where we use the assumption that our ground field $k=\bC$, because the proof of \cite[Proposition 4.5]{AGPS} relies on an analytic argument.

The second technical problem in our strategy is to control the closures of the irreducible components as well as their deformations under the $\bG_m$--action. To do this we study $\bG_m$--equivariant maps of $\bA^1$ and $\bP^1$  to the moduli stack of Higgs bundles. The basic argument which helps us to control these maps is the fact that the degree of equivariant line bundles on $\bP^1$ can be read of the weights of $\bG_m$ on  the stalks at the fixed points of the action. Choosing an ample, $\bG_m$--equivariant line bundle on $M_n^d$ this allows one to find a natural ordering of the fixed point components and this turns out to give sufficiently many restrictions on the closures of our components to conclude our argument.

The structure of the article is as follows. In the first section we introduce notations and give the argument for the vanishing of the intersection form Theorem \ref{main}, following the lines of the argument sketched above using Theorem \ref{irreducible} as an assumption. Section 2 then proves this assumption in case the ground field is $\bC$. A reader interested only in Theorem \ref{irreducible} could skip forward to Section 2 after reading through the notations introduced in Section 1.1.

\noindent{\bf Acknowledgments:} I am greatly indebted to T.\ Hausel. Discussions with him are the reason why this article exists and it was his idea to use the Poincar\'e-Hopf theorem to finish the proof of Theorem \ref{main}. A large part of this work was done while visiting his group at EPFL. A part of the work was funded through the SFB/TR 45 of the DFG.
\section{The intersection form}

\subsection{Fixed point strata, chains and the stratification of the global nilpotent cone}\label{notations}
In this section we introduce the notation and recall the basic results on the Hitchin fibration that we will use. We refer to \cite{GPHS} for more detailed references. We will denote by $\Bun_n^d$ the moduli stack of vector bundles of rank $n$ and degree $d$ on $C$ and by   $\Higgs_{n}^{d}$ the moduli stack of Higgs bundles of rank $n$ and degree $d$.

The multiplicative group $\bG_m$ acts on $\Higgs_{n}^{d}$, by $t\cdot(\cE,\theta):= (\cE,t\theta)$. This also defines an action on the coarse moduli space $M_{n}^{d}$. This action is equivariant with respect to the Hitchin map, if one defines the $\bG_m$--action on $H^0(C,\Omega^i)$ by multiplication by $t^i$.  Since this action on $\cA=\oplus_{i=1}^n H^0(C,\Omega^i)$ is contracting for $t\to 0$, the fixed points for the action are contained in $h^{-1}(0)$.

\begin{remark}\label{reduction}
Since the action contracts $M_n^d$ to $h^{-1}(0)$ one has $$H^*(M_n^d)\cong H^*(h^{-1}(0)).$$ In particular $H^*(M_n^d)=0$ for $*> 2 \dim h^{-1}(0) = \dim M_n^d$ and by Poincar\'e duality $H^*_c(M_n^d)=0$ for $*< \dim M_n^d$. The same argument holds for $M_{\PGL_n}^d$. Therefore in order to prove Theorem \ref{main} it is sufficient to prove it for the middle degree $*=\dim M_{\PGL_n}^d$. The top cohomology of the projective equidimensional varieties $h^{-1}(0),h_{\PGL_n}^{-1}(0)$ is freely generated by classes indexed by their irreducible components, so that $H^{\dim M_{\PGL_n}^d}_c(M_{\PGL_n}^d)$ is freely generated by the cycle classes of the irreducible components of $h_{\PGL_n}^{-1}(0)$. Since $h_{\PGL_n}^{-1}(0)=h^{-1}(0)/\Pic_C$ it will suffice to describe the components of $h^{-1}(0)$. 
\end{remark}
	
Hitchin \cite{Hitchin} and Simpson observed that the fixed points of the $\bG_m$--action can be described as moduli spaces of stable chains (e.g. \cite[Lemma 9.2]{HT1}). To clarify the different conventions used for chains and Higgs bundles we will denote by
$$F_{\un{n}}^{\un{d}} := \left\langle \big( (\cE_i)_{i=0\dots r},  (\theta_i\colon \cE_{i} \to \cE_{i-1} \tensor \Omega)_{i=1,\dots r}\big) \left| {\cE_i\in \Bun_{n_i}^{d_i} \atop (\oplus \cE_i, \oplus \theta_i) \in M_{n}^{d} }\right. \right\rangle$$
the fixed point stratum in $h^{-1}(0)$ and by
$$\Chain_{\un{n}}^{\un{d}} =\langle (\cE_r \map{\phi_r} \cE_{r-1} \to \dots \map{\phi_1} \cE_0 | \cE_i \in \Bun_{n_i}^{d_i} \rangle $$
the stack of chains of rank $\un{n}$ and degree $\un{d}$. 

We will prove in Section 2 (see Remark \ref{stab}) that the following assumption holds:
\begin{assumption}\label{assum1}
If $(n,d)=1$ then for any  $\un{n},\un{d}$ with $\sum n_i=n,\sum d_i=d$ the variety $F_{\un{n}}^{\un{d}}$ is irreducible. $(\star_n)$
\end{assumption}
\begin{remark}
The above assumption is well known for $n=2,3$ by the explicit description of the strata $F_{\un{n}}^{\un{d}}$  given by Hitchin and Thaddeus. For $n=4$ the assumption follows from the results of \cite{AGPS} and \cite{BGPG}.
\end{remark}

We will denote by $F_{\un{n}}^{\un{d},\pm}$ the locally closed subvarieties of $M_n^d$ given by the points such that the limits of the points under the $\bG_m$--action for $t\to 0$ (resp. $t\to \infty$) lie in $F_{\un{n}}^{\un{d}}$. The theorem of Bia\l ynicki-Birula \cite[Theorem 4.1]{BialBirula} implies that the maps $F^{\un{d},\pm}_{\un{n}}\to F_{\un{n}}^{\un{d}}$  given by mapping a point to its limit under the $\bG_m$--action are smooth, locally trivial fibrations such that the fibers are affine spaces. The dimension of the fibers of the projection from $F^{\un{d},+}_{\un{n}}$ equals the dimension of the part the tangent space at a point in $F_{\un{n}}^{\un{d}}$ on which $\bG_m$ acts with positive weights and similarly the dimension of the fibers of the projection from $F^{\un{d},-}_{\un{n}}$ is the dimension of the subspace of the tangent space on which $\bG_m$ acts with negative weights.

Laumon proved that the Hitchin fibration is Lagrangian \cite{LaumonNilp} which implies (this was observed in \cite[Section 9]{HT1}) that the map $F_{\un{n}}^{\un{d},+} \to F_{\un{n}}^{\un{d}}$ is a fibration with fibers of dimension $\dim \cA$. For the negative strata we know that $F_{\un{n}}^{\un{d},-}\subset h^{-1}(0)$. Moreover, since  $M_{n}^{d}$ is smooth of dimension $2\dim \cA$ and the fibers of $F_{\un{n}}^{\un{d},+} \to F_{\un{n}}^{\un{d}}$ are of dimension $\dim \cA$ we conclude that $\dim F_{\un{n}}^{\un{d},-} = \dim \cA$. Finally, since $h$ is proper \cite{Nitsure} we know that every point in $h^{-1}(0)$ has limit points under the $\bG_m$--action and therefore $h^{-1}(0) = \cup F_{\un{n}}^{\un{d},-}$. 

Thus, assumption $(\star_n)$ implies that the closures of the $F_{\un{n}}^{\un{d},-}$ are the irreducible components of $h^{-1}(0)$.

\subsection{A partial ordering of the irreducible components of $h^{-1}(0)$}

To study the intersection form we will need some information on which of the fixed point strata $F_{\un{m}}^{\un{e}}$ can intersect the closure of a component $F_{\un{n}}^{\un{d},-}$. Over the complex numbers the Morse function $\mu=|| \theta \theta^*||_{L^2}$ introduced by Hitchin would give restrictions on the numerical invariants. For us the following algebraic analogue will be useful: Let us denote by $\cL_{\det}$ the line bundle on $\Bun_n^d$ given by the inverse of the determinant of cohomology, i.e., its fiber at a bundle $\cE$ is given by $$\cL_{\det}|_{\cE}=\det H^1(C,\End(\cE)) \tensor (\det H^0(C,\End(\cE)))^{-1}.$$ The same formula also defines a line bundle on the stack of $\PGL_n$ bundles $\Bun_{\PGL_n}^d$, which we will denote by the same symbol.
 The pull back of $\cL_{\det}$ to $\Higgs_n^d$ defines a line bundle $\cL_{M_n^d}$ on $M_n^d$, because the central automorphisms of a bundle $\cE$ act trivially on the fibers of $\cL_{\det}$. It is known that the induced line bundle on $M_{\PGL_n}^d$ is (relatively) ample with respect to $h_{\PGL_n}$ \cite[Theorem 5.10 and its proof, Remark 5.12]{Nitsure}. 

Given a bundle $(\cE,\theta) \in h^{-1}(0)$, which is not a fixed point, the closure of the $\bG_m$ orbit of the bundle defines an equivariant  map $\bP^1 \to h^{-1}(0) \subset M_{n}^{d}$, which by definition induces a  2-commutative diagram:
$$\xymatrix{
\bP^1 \ar[d]\ar[r]^f &  M_{n}^{d} \ar[d]^{\text{forget}}\ar@{=>}[dl] \\
[\bP^1/\bG_m] \ar[r]^{\overline{f}} & \Bun_n^d.
}$$
Since $\cL_{M_{\PGL_n}^d}$ is ample on $h_{\PGL_n}^{-1}(0)$ and the composition $\bP^1\map{f} M_n^d \to M_{\PGL_n}^d$ is not the constant map, we know that $\deg(f^*(\cL_{M_n^d}))>0$. Moreover, the degree of an equivariant line bundle on $\bP^1$ turns out to be determined by the weights of the $\bG_m$ action on the fibers of this bundle over $0,\infty$. These weights are the numerical invariants we will use. In order to fix our sign conventions let us recall the relation between the degree and the weights in detail: 
\begin{remark}
Let $\cL$ be a $\bG_m$--equivariant line bundle on $\bP^1$. 
We fix coordinates of the standard affine charts of $\bP^1$: Around $0$ we choose $\bA^1_0:= \bA^1 = \Spec k[x]$ and $\bA^1_{\infty} := \bA^1 = \Spec k[y]$. 
The standard action of $\bG_m=\Spec k[t,t^{-1}]$ on $\bA^1_0$ is given on points by $(t,x)\mapsto tx$ and on coordinate rings by $x \mapsto t \tensor x$. On global functions 
$H^0(\bA^1,\cO_{\bA^1})$ this induces $t.f (x) =f(t^{-1}x)$, i.e., it is given by $x \mapsto t^{-1}x$.

Given a $\bG_m$--action on a line bundle $\cL$ on $\bA^1$ we can choose any global generator $e\in H^0(\bA^1,\cL) \cong k[x]$. Since the space of globally generating sections is of dimension 1 the action will then be given by $t.e := t^{w_0}.e$ for some integer $w_0$. In particular if $w_0\geq 0$ the invariant sections are spanned by $x^{w_0} e$ and there are no invariant sections if $w_0<0$.

Similarly, restricting a $\bG_m$--equivariant line bundle on $\bP^1$ to $\bA_\infty^1$ we can choose a trivialization $e_\infty \in H^0(\bA^1_\infty,\cL)$ and then $t.e_\infty = t^{w_\infty}e_\infty$ for some $w_\infty \in \bZ$.

We will denote the weight of the $\bG_m$--action on the stalks of $\cL$ at $0$ and $\infty$ by $$\wt(\bG_m,\cL_0):=w_0, \wt(\bG_m,\cL_\infty):= w_\infty.$$

The degree of the equivariant bundle $\cL$ is determined by the weights as follows: Choose trivializations $e_0,e_\infty$ over $\bA_0^1,\bA^1_\infty$ as before. As the sections are equivariant with respect to the $\bG_m$--action we must have $e_0|_{\bG_m } = y^d e_\infty|_{\bG_m}$ with $d=\deg(\cL)$. So we find $d+w_{\infty} = w_0$, i.e., 
$$\deg{\cL}= \wt(\bG_m,\cL_0) - \wt(\bG_m,\cL_\infty).$$
\end{remark}
\begin{example}\label{degwt}
In our applications equivariant bundles will be obtained from maps $$f\colon [\bA^1/\bG_m] \to \Bun_{n}^d.$$ For any filtered vector bundle $\tilde{\cE}_r \subset \dots \tilde{\cE}_0=\cE$ with subquotients $\cE_i=\tilde{\cE}_i/\tilde{\cE}_{i-1}$ 
we have a natural degeneration $f_{\cE_\bullet}\colon [\bA^1/\bG_m] \to \Bun_{n}^d$, given by the action of $\bG_m$ on the space of iterated extensions $\Ext(\cE_0,\dots,\cE_r)$ which is induced from the action of $\bG_m$ on $\oplus \cE_i$, where $\cE_i$ is of weight $i$. 

The fiber of $f^*_{\cE_\bullet}\cL_{\det}|_0$ is given by $\det H^1(C,\cEnd(\oplus \cE_i)) \tensor \det(H^0(C,\cEnd(\oplus\cE_i)))^{-1}$. By transport of structure we can compute the weight as follows:
\begin{align*} 
{\wt(\bG_m, f_{\cE_\bullet}^*\cL_{\det}|_0)}&= - \sum_{i,j} (i-j) \chi( H^*(C, \cHom(\cE_j,\cE_i))) \\
&= -2 \sum_{i>j} (i-j) \deg(\cHom(\cE_j,\cE_i))\\
&= -2 \sum_{i>j} (i-j) n_in_j(\mu(\cE_i) -\mu(\cE_j))
\end{align*}
Here we used the notation $\mu(\cE_i)=\frac{\deg(\cE_i)}{\rk(\cE_i)}$ for the slope of a vector bundle and the Riemann-Roch theorem.

In particular this number is $<0$ if $\cE$ is unstable and $\tilde{\cE}_r \subset \dots \subset \tilde{\cE}_0=\cE$ is the Harder-Narasimhan filtration of $\cE$.

We can also rewrite the weight in terms of the bundles $\tilde{\cE}_i$ as:
$$\wt(\bG_m, f_{\cE_\bullet}^* ,\cL_{\Bun_n^d}) = -2 \sum_{i=1}^r \deg(\cHom(\cE/\tilde{\cE}_i,\tilde{\cE_i})).$$
\end{example}

\begin{example}\label{wtMnd} Let us return to the case of the action of $\bG_m$ on $M_n^d$.  Note that any equivariant 2-commutative diagram
$$\xymatrix{
\Spec(k) \ar[r]^{(\cE,\theta)}\ar[d] & M_n^d \ar[d]\ar@{=>}[dl] \\
B\bG_m \ar[r] & \Bun_n^d 
}$$
induces (by definition) a map $\lambda\colon \bG_m \to \Aut(\cE)$, i.e., a grading $\cE= \oplus \cE_i$ such that the family  $(\cE,t\theta)$ on $\bG_m$ becomes isomorphic to the constant family $(\cE,\theta)$. Concretely this means $\lambda(t)^{-1} \circ \theta \circ \lambda(t) = t\theta$, so that $\theta$ must be given by a collection of morphisms $\cE_i \to \cE_{i-1} \tensor \Omega$. 

Thus for any $\bG_m$--equivariant map $f\colon \bP^1 \to M_{n}^{d}$ we have $f(0) = (\oplus \cE_i,\phi_i)$ and ${f}(\infty) = (\oplus \cE_{i}^\prime,\phi_i^\prime)$, where $\phi_i\colon: \cE_i \to \cE_{i-1}\tensor \Omega_C$ and $\phi_i^\prime\colon \cE_i^\prime \to \cE_{i-1}^\prime\tensor \Omega_C$. Thus from Example \ref{degwt} we find
$$\wt(\bG_m, \cL_{\det}|_{f(0)})= -2 \sum_{i>j} (i-j) \deg(\cHom(\cE_j,\cE_i)) = -2 \sum_{i>j} (i-j) n_in_j (\mu(\cE_i)-\mu(\cE_j))$$
$$\wt(\bG_m, \cL_{\det}|_{f(\infty)})=  -2 \sum_{i>j} (i-j) \deg(\cHom(\cE_j^\prime,\cE_i^\prime)) = -2 \sum_{i>j} (i-j) n_in_j (\mu(\cE_i^\prime)-\mu(\cE_j^\prime))$$
Since $\cL_{M_n^d}$ is positive we  have $\deg(f^*\cL_{M_{n}^{d}})= \wt(\bG_m, \cL_{\det}|_{f(0)}) - \wt(\bG_m, \cL_{\det}|_{f(\infty)})>0$, i.e. $$\wt(\bG_m, \cL_{\det}|_{f(0)}) > \wt(\bG_m, \cL_{\det}|_{f(\infty)}).$$

Since the above weight only depends on the numerical invariants $\un{n},\un{d}$ of a chain $\cE_\bullet$  we will abbreviate $$\wt(\un{n},\un{d}):=  -2\sum_{i<j} (j-i) n_in_j (\frac{d_j}{n_j}-\frac{d_i}{n_i}).$$

In particular we see that the invariant $\wt(\un{n},\un{d})$ must strictly increase on the closure of $\bG_m$-orbits of points in $F_{\un{n}}^{\un{d},-}$. It is $0$ on $F_n^d$, the open stratum parameterizing stable bundles with trivial Higgs field and stability ensures that it is $<0$ on all other strata.
\end{example}

\begin{proposition}\label{closureNilp}
If $F_{\un{m}}^{\un{e}} \cap \overline{F_{\un{n}}^{\un{d},-}} \neq \emptyset$ then we have:
$$\wt(\un{n},\un{d}) \leq  \wt(\un{m},\un{e})$$
and equality holds if and only if $(\un{m},\un{e})=(\un{n},\un{d})$.
\end{proposition}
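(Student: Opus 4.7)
The plan is to build, for any boundary point $x \in F_{\un{m}}^{\un{e}} \cap \overline{F_{\un{n}}^{\un{d},-}}$, a $\bG_m$-equivariant chain of rational curves in $h^{-1}(0)$ linking $x$ to a point of $\overline{F_{\un{n}}^{\un{d}}}$, and to extract the weight comparison by applying Example \ref{wtMnd} to each link. The easy case is when $x$ already lies in $F_{\un{n}}^{\un{d},-}$: since $x$ is $\bG_m$-fixed, its $t\to\infty$ limit is both $x$ and a point of $F_{\un{n}}^{\un{d}}$, so $x \in F_{\un{n}}^{\un{d}} \cap F_{\un{m}}^{\un{e}}$ and the disjointness of fixed strata forces $(\un{m},\un{e}) = (\un{n},\un{d})$ with equal weights. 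Otherwise I pick a smooth pointed curve $\gamma\colon (C,0) \to (\overline{F_{\un{n}}^{\un{d},-}}, x)$ with $\gamma(C\setminus\{0\}) \subset F_{\un{n}}^{\un{d},-}$.

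Set $\Psi(s,t) := s\cdot \gamma(t)$ on $\bG_m \times C$. For $t \ne 0$ this extends to a morphism $\bP^1\times\{t\} \to h^{-1}(0)$ by properness of $h^{-1}(0)$, with $\Psi(\infty,t) \in F_{\un{n}}^{\un{d}}$, while on $\bP^1\times\{0\}$ the map is the constant $x$. The remaining indeterminacy of the rational map $\bP^1\times C \dashrightarrow h^{-1}(0)$ is concentrated over $(0,0)$ and $(\infty,0)$, and since the source is a smooth surface with projective target it can be resolved by finitely many blowups of $\bG_m$-fixed points, yielding a $\bG_m$-equivariant morphism $\tilde\Psi\colon \tilde X \to h^{-1}(0)$. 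The central fibre $\tilde X_0$ is a tree of $\bP^1$'s: the strict transform $\bar P$ of $\bP^1\times\{0\}$, contracted by $\tilde\Psi$ to $x$, is joined through a chain $E^1,\dots,E^k$ of $\bG_m$-invariant exceptional $\bP^1$'s to the strict transform of $\{\infty\}\times C$. A direct local coordinate check at each blowup shows that the node of $E^i$ adjacent to $\bar P$ has tangent weight $+1$ on $E^i$ (and so corresponds to $0 \in \bP^1$ under the standard action), while the opposite node corresponds to $\infty$. Writing $x = x_0, x_1, \dots, x_k = q^*$ for the $\tilde\Psi$-images of these nodes, where $q^* = \lim_{t\to 0}\Psi(\infty,t) \in \overline{F_{\un{n}}^{\un{d}}}$, the positivity of $\cL_{M_n^d}$ exploited in Example \ref{wtMnd} gives
$$\wt(\bG_m,\cL_{M_n^d}|_{x_{i-1}}) \;\geq\; \wt(\bG_m,\cL_{M_n^d}|_{x_i})$$
for every $i$, with strict inequality whenever $\tilde\Psi|_{E^i}$ is non-constant.

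Summing the telescoping chain of inequalities yields $\wt(\un{m},\un{e}) \geq \wt(\bG_m,\cL_{M_n^d}|_{q^*})$. If $q^* \in F_{\un{n}}^{\un{d}}$ the right-hand side equals $\wt(\un{n},\un{d})$ and we are done; otherwise $q^* \in F_{\un{n}'}^{\un{d}'} \cap \overline{F_{\un{n}}^{\un{d}}} \subset F_{\un{n}'}^{\un{d}'} \cap \overline{F_{\un{n}}^{\un{d},-}}$ and the construction iterates at $q^*$, terminating after finitely many steps by finiteness of the fixed-point stratification of $h^{-1}(0)$. For the equality case, $\wt(\un{m},\un{e}) = \wt(\un{n},\un{d})$ forces every $\tilde\Psi|_{E^i}$ (and every iterate thereof) to be constant, so $q^* = x$ and the $(\un{n},\un{d})$-graded chains $\Psi(\infty,t)$ specialise to $x$; using that the $\bG_m$-grading of a stable Higgs bundle is unique up to the centre, the specialising grading must coincide with the one witnessing $x \in F_{\un{m}}^{\un{e}}$, forcing $(\un{m},\un{e}) = (\un{n},\un{d})$. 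The main obstacle I anticipate is the bookkeeping for the iterative step combined with the equality analysis — in particular, carefully ruling out weight-preserving degenerations that mix numerically distinct strata; by contrast, the indeterminacy resolution and the weight computation on each $E^i$ are essentially standard.
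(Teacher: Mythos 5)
Your argument is essentially the paper's own proof: a pointed curve in $\overline{F_{\un{n}}^{\un{d},-}}$ through the boundary point, the $\bG_m$-sweep of it, equivariant resolution of the indeterminacy at the fixed point by point blow-ups, and monotonicity of $\wt(\bG_m,\cL_{\det})$ from Example \ref{wtMnd} along the resulting equivariant chain of $\bP^1$'s, with strictness on non-contracted components giving both the inequality and the equality case. The only divergence --- your fallback iteration when $q^*\notin F_{\un{n}}^{\un{d}}$, whose termination via ``finiteness of the stratification'' is justified rather loosely --- is unnecessary: each $F_{\un{n}}^{\un{d}}$ is a union of connected components of the (closed) $\bG_m$-fixed locus, hence closed, so $q^*\in\overline{F_{\un{n}}^{\un{d}}}=F_{\un{n}}^{\un{d}}$ automatically, which is what the paper's proof implicitly uses when it asserts that the last fixed point of the chain maps into $F_{\un{n}}^{\un{d}}$.
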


\begin{proof}
Let $x_0\in F_{\un{m}}^{\un{e}} \cap \overline{F_{\un{n}}^{\un{d},-}}$. Choose a smooth, connected curve $g\colon X \to \overline{F_{\un{n}}^{\un{d},-}}$ such that $x_0=g(x)$ for some $x \in X$ and $X-\{x\} \subset F_{\un{n}}^{\un{d},-}$.
This defines a $\bG_m$--equivariant map $f\colon \bG_m \times X \to h^{-1}(0) \subset M_{n}^{d}$. Taking the limit $t\to \infty$ in $\bG_m$ the map $f$ extends to $$\tilde{f} \colon (\bA^1 \times X)-\{(0,x)\} \to F_{\un{n}}^{\un{d},-}$$ such that $\tilde{f}(0 \times (X-\{x\}))\subset F_{\un{n}}^{\un{d}}$ and $\tilde{f}(\bG_m \times x)=x_0$.

Since $h^{-1}(0)$ is proper there exists an equivariant blow-up $p\colon B \to \bA^1  \times X$ supported at $0\times x$, such that $\tilde{f}$ extends to a map $\tilde{f}\colon B \to M_{n}^{d}$. The exceptional fiber $p^{-1}(0\times x)$ is an equivariant chain of $\bP^1$'s and we can assume that none of the irreducible components $\bP^1\cong E_i\in p^{-1}(0\times x)$ gets contracted under $f$.

The proper transform of $0\times X$ maps to $F_{\un{n}}^{\un{d}}$ and $\bA^1 \times x_0$ maps to $x_0 \in F_{\un{m}}^{\un{e}}$.  We thus find an equvariant chain of $\bP^1$'s in $B$ that connects $x_0$ to a point which is mapped to $F_{\un{n}}^{\un{d}}$. Let us denote the fixed points of the action of $\bG_m$ on the chain of $\bP^1$'s by $p_0,\dots,p_k$ ordered in such a way that $p_0$ corresponds to the attractive fixed point $0$ on the first $\bP^1$ and $p_k$ to the repellent fixed point $\infty$ on the last line. Since we took the completion along the flow in $F_{\un{n}}^{\un{d},-}$, the standard $\bG_m$ action on $M_n^d$ will orient the fixed points $p_0,\dots,p_k$ such that $x_0=\tilde{f}(p_0)$ and $\tilde{f}(p_k) \in F_{\un{n}}^{\un{d}}$. We have seen in Example \ref{wtMnd} that the weight of the $\bG_m$ action on $\cL_{\Bun}|_{p_i}$ is strictly monotone along such a chain, i.e. if $p_0\neq p_k$ we find, 
$$\wt(\un{m},\un{e}) = \wt(\bG_m,\cL_{p_0}) > \dots > \wt(\bG_m,\cL_{p_k})=\wt(\un{n},\un{d}).$$
\end{proof}

\begin{remark}
There is at least one more necessary condition in order to have $F_{\un{m}}^{\un{e}} \cap \overline{F_{\un{n}}^{\un{d},-}} \neq \emptyset$: The universal family over $F_{\un{n}}^{\un{d},-}$ has a canonical filtration $\tilde{\cE}_\bullet\subset \cE$ any bundle in the closure of the family will admit a filtration $\overline{\cE}_\bullet$ (compatible with the Higgs field $\theta$) such that $\rk(\tilde{\cE}_i)=\rk(\overline{\cE}_i)$ and $\deg(\tilde{\cE}_i)\leq \deg(\overline{\cE}_i)$.
\end{remark}

\subsection{Finding compact cycles in fibers over reducible spectral curves}

The next step is to deform the fixed point components $F_{\un{n}}^{\un{d}}$  into cycles that do not intersect the nilpotent cone. For any $a\in \cA=\oplus_{i=1}^n H^0(C,\Omega_C^i)$ we will denote by $C_a\subset T^*C$ the corresponding spectral curve, so that as in \cite{BNR} (see \cite{Schaub} for general $a$) any Higgs bundle $(\cE,\theta)\in h^{-1}(a)$ can be viewed as a coherent $\cO_C$-torsion free sheaf on $C_a$.

Let $F_{\un{n}}^{\un{d}}$ be a non empty component with $\un{n}\neq (n)$ and chose $(\oplus \cE_i, \theta_i)\in F_{\un{n}}^{\un{d}}$.

Choose $\omega_0,\dots,\omega_r\in H^0(C,\Omega)$ such that for all $i\neq j$ the form $\omega_i-\omega_j$ has only simple zeroes and such that all of the divisors $D_{ij}:=\nst(\omega_i-\omega_j)$ are mutually disjoint.
\begin{lemma}
If $g>0$ such sections $\omega_i$ exist.
\end{lemma}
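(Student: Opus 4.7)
The plan is a standard genericity argument on $V:=H^0(C,\Omega)$, handling the cases $g=1$ and $g\geq 2$ separately.

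For $g=1$ the bundle $\Omega$ is trivial, so $V$ is one-dimensional and every nonzero section is nowhere vanishing. Fixing a trivialising section $\omega$ and choosing distinct scalars $c_0,\dots,c_r$, the sections $\omega_i:=c_i\omega$ satisfy $\omega_i-\omega_j=(c_i-c_j)\omega$, so the $D_{ij}$ are all empty and both conditions hold trivially.

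For $g\geq 2$, $\dim V=g\geq 2$ and I would reduce to the following auxiliary claim: the subset $U\subseteq V$ consisting of sections with only simple zeroes is Zariski open and dense. To prove it, consider the incidence variety $Z=\{(\omega,p)\in V\times C\mid v_p(\omega)\geq 2\}$. The fibre of the second projection $Z\to C$ over $p$ is the linear subspace $H^0(C,\Omega(-2p))\subset V$, of dimension $g-2+h^0(\cO_C(2p))$ by Riemann--Roch. On a smooth curve of genus $\geq 2$ one has $h^0(\cO_C(2p))=1$ for every $p$ outside the (finite) hyperelliptic Weierstrass locus, where it jumps to $2$. Stratifying $Z$ accordingly, the generic stratum has total dimension $1+(g-2)=g-1$ and the Weierstrass stratum has total dimension $0+(g-1)=g-1$, so $\dim Z\leq g-1$. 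Hence the image of $Z$ in $V$ is a proper closed subvariety, proving the claim.

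Given the claim I would construct the $\omega_i$ inductively. Suppose $\omega_0,\dots,\omega_{k-1}$ have already been chosen so as to satisfy both conditions. The admissible values of $\omega_k\in V$ are those lying in the intersection of finitely many open dense subsets: for each $j<k$, the translated open $\omega_j+U$, which guarantees that $\omega_k-\omega_j$ has only simple zeroes; and for each $j<k$ and each of the finitely many points $p\in\bigcup_{0\leq i<l<k}D_{il}$, the complement of the affine hyperplane $\{\omega\in V : \omega(p)=\omega_j(p)\}$, which guarantees that $D_{jk}$ meets none of the previously constructed $D_{il}$ (and also no $D_{ik}$ with $i\neq j$, since a common point of $D_{ik}$ and $D_{jk}$ forces some $p\in D_{ij}$, which has already been listed). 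Since $\dim V\geq 2$, a finite intersection of dense opens in the affine space $V$ is nonempty, and any element of it is a valid $\omega_k$.

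The only subtle point is the dimension bound on $Z$: naively one might worry that on hyperelliptic curves the enlarged fibres over Weierstrass points inflate $\dim Z$ to $g$, which would render the claim useless, but the stratification shows that the Weierstrass contribution is only $g-1$ precisely because the Weierstrass locus is zero-dimensional. Everything else is routine open-condition bookkeeping, relying on the fact that a finite intersection of nonempty Zariski opens in an irreducible variety is nonempty.
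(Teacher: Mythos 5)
Your proof is correct, but the key density claim is established by a genuinely different argument from the paper's. You run a direct incidence-variety count: $Z=\{(\omega,p)\,:\,v_p(\omega)\geq 2\}$ has fibre $H^0(C,\Omega(-2p))$ over $p$, and stratifying by $h^0(\cO_C(2p))$ gives $\dim Z\leq g-1<g=\dim H^0(C,\Omega)$, the hyperelliptic Weierstrass locus being finite. The paper instead argues by contradiction: if every canonical divisor had a multiple point, recording the generic multiplicities would give a generically injective rational map $\bP^{g-1}\dashrightarrow C^{(d_1)}\times\dots\times C^{(d_n)}$, which is impossible because by Clifford's theorem the fibres of $\prod C^{(d_i)}\to\prod\Pic^{d_i}$ have dimension smaller than $g-1$ and rational maps from $\bP^{g-1}$ to abelian varieties are constant. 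Your route is more elementary and self-contained (Riemann--Roch plus the fact that $h^0(\cO_C(2p))=2$ only at hyperelliptic Weierstrass points), at the cost of a case split; the paper's is shorter but leans on two nontrivial inputs. The subsequent bookkeeping (translated opens for simple zeroes, evaluation conditions at finitely many points for disjointness) is the same in spirit, the paper working with the tuple space $H^0(C,\Omega)^{r+1}$ at once where you induct on $k$; your observation that disjointness of $D_{ik}$ and $D_{jk}$ for $i,j<k$ is automatic once $D_{jk}$ avoids the previously constructed $D_{ij}$ is correct, as is the trivial treatment of $g=1$. Two small points: the fibre dimension should read $g-3+h^0(\cO_C(2p))$ rather than $g-2+h^0(\cO_C(2p))$ (the values $g-2$ and $g-1$ you actually use in the stratification are the correct ones, so nothing breaks); and calling $\{\omega:\omega(p)=\omega_j(p)\}$ an affine hyperplane tacitly uses that evaluation at $p$ is a nonzero functional, i.e.\ that the canonical system is base-point free ($\dim H^0(C,\Omega(-p))=g-1$), which the paper verifies explicitly and you should state as well.
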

\begin{proof} This is Clifford's theorem:
The set of divisors in $|\Omega_C|$ that have only simple zeroes is non empty, e.g. because otherwise the generic multiplicities of the zeroes would define a generically injective rational map 
$$\xymatrix@C=4em{ {\bP(H^0(C,\Omega))=\bP^{g-1}} \ar@{^(-->}[r]& C^{(d_1)} \times \dots \times C^{(d_n)} \ar[r]^-{D_i \mapsto \sum iD_i} & C^{(2g-2)}. }$$
However, the dimension of the fibers of $\prod C^{(d_i)} \to \prod \Pic^{d_i}$ is smaller than $g-1$ by Clifford's theorem. Since all rational maps from $\bP^{g-1}$ to abelian varieties are constant, this contradicts the injectivity of the rational map.

The set of all $(\omega_i)$ such that for some $i\neq j$ the differential $\omega_i-\omega_j$ has multiple zeroes is thus a proper closed subset of $H^0(\Omega_C)^{r+1}$. Similarly, the subset of all $\omega$ not vanishing at any given point is a non-trivial open subset because $\dim H^0(C, \Omega(-x)) = g-2+ \dim H^0(C,\cO(x)) =g-1$, thus also the conditions $\nst(\omega_{i}-\omega_j) \cap \nst(\omega_{k}-\omega_l) =\emptyset$ define non-empty open subsets of $H^0(C,\Omega_C)^{r+1}$. 
 \end{proof}

We define $$(\cE_{\un{\omega}},\theta_{\un{\omega}}):= (\oplus \cE_i,  \oplus \theta_i + \oplus id_{\cE_i} \tensor \omega_i )$$ and similarly we will consider $$(\cE_{t\omega},\theta_{t\omega})= (\oplus \cE_i, \oplus \theta_i + \oplus id_{\cE_i} \tensor t\omega_i )$$ as a family of Higgs bundles over $C \times \bA^1$. Note that the standard action of $\bG_m$ on $\cE_i$ defines an isomorphism $(\oplus \cE_i, t( \oplus \theta_i + \oplus id_{\cE_i} \tensor \omega_i )) \cong (\oplus \cE_i, \oplus \theta_i + \oplus id_{\cE_i} \tensor t\omega_i )$.

For $i\geq j$ let us also abbreviate $\theta_{ij}:= \theta_{j-1}\circ \dots \circ \theta_i$ for $i>j$ and $\theta_{ii}=\id_{\cE_i}$.  

\begin{lemma}\label{stabledef}
\begin{enumerate}
\item[]
\item For all $t\in \bA^1$ the Higgs bundle $(\cE_{t\omega},\theta_{t\omega})$ is stable. 
\item The characteristic polynomial of $\theta_{t\omega}$ is given by $a_t:=\prod(x-t\omega_i)^{n_i}$, so that  the spectral curve is a union of the $n_i-1$-th infinitesimal neighborhoods the sections $t\omega_i$. For all $t$ the irreducible components of the spectral curve intersect at the divisors $D_{ij}=\nst{\omega_i-\omega_j}$.
\item Considered as sheaf on $C_{a_t}$ the Higgs bundle $\cE_{t\omega}$ admits a filtration $$\cF_0\subset \dots \subset \cF_{r-1} \subset \cF_r =\cE_{t\omega}$$ such that the subquotients are given by $\cF_i/\cF_{i-1} \cong \iota_{i,*} \cE_i$, where $\iota_i \colon C \to C_{a_t}$ is the closed embedding defined by $t\omega_i$. 
\item As sheaf on  $C_{a_t}$ the Higgs bundle $\cE_{t\omega}$ can also be described as:
$$\cE_{t\omega}\cong \ker\bigg( \bigoplus_{i} \iota_{i,*} \cE_i \tensor \Omega^{r-i} \to \bigoplus_{i>j} \big(\iota_{i,*} \cE_i \tensor \Omega^{r-i} \oplus \iota_{j,*} \cE_j \tensor \Omega^{r-j}\big)|_{D_{ij}}/ \phi_{ij}(\cE_i|_{D_{ij}})\bigg),$$
where $\phi_{ij} = (id, \prod \theta_{ij})$.
\end{enumerate}
\end{lemma}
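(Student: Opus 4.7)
My plan would be to treat the four parts in the order stated, since later parts use the spectral curve computed in (2), but to note that (1) can be handled by a soft argument independently of the explicit description in (3)--(4).

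For part (1), the key observation is that the $\bG_m$-action rescaling $\lambda(t)|_{\cE_i}=t^{-i}$ gives an isomorphism $(\cE_{t\omega},\theta_{t\omega})\cong(\cE_\omega,t\theta_\omega)$ for $t\neq 0$, because conjugation by $\lambda(t)$ multiplies each $\theta_i$ by $t$ while leaving each $\id_{\cE_i}\tensor\omega_i$ fixed. Hence all members of the family over $\bG_m$ lie in a single $\bG_m$-orbit in $\Higgs_n^d$, so stability at one $t_0\neq 0$ forces stability for every $t\neq 0$. At $t=0$ the Higgs bundle is $(\oplus\cE_i,\oplus\theta_i)\in F_{\un n}^{\un d}\subset M_n^d$, which is stable by definition of the fixed-point stratum. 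Since stability is open in flat families of Higgs bundles, it holds in a Zariski neighborhood of $0\in\bA^1$, and that neighborhood necessarily contains some $t_0\neq 0$; combining this with the orbit-equivalence above gives stability for all $t\in\bA^1$.

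For part (2), I would write $\theta_{t\omega}$ in block form with respect to $\cE=\oplus\cE_i$: the diagonal blocks are the scalar endomorphisms $t\omega_i\cdot\id_{\cE_i}$ and the only non-zero off-diagonal blocks are the $\theta_i:\cE_i\to\cE_{i-1}\tensor\Omega$, so the matrix is block lower triangular and the characteristic polynomial is $a_t=\prod(x-t\omega_i)^{n_i}$. The spectral curve $C_{a_t}\subset T^*C$ is therefore the union of the $(n_i-1)$-st infinitesimal neighborhoods of the sections $x=t\omega_i$, and two reduced sections $x=t\omega_i$, $x=t\omega_j$ agree precisely on $\nst(\omega_i-\omega_j)=D_{ij}$, which is the claimed intersection locus. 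For part (3) I would verify directly that $\cF_i:=\bigoplus_{j\le i}\cE_j$ is $\theta_{t\omega}$-stable: each $\theta_j$ maps $\cE_j$ into $\cE_{j-1}\tensor\Omega\subset\cF_{j-1}\tensor\Omega$, and each $\id_{\cE_j}\tensor t\omega_j$ preserves $\cE_j$, so each $\cF_i$ is a Higgs subbundle; the induced Higgs field on $\cF_i/\cF_{i-1}\cong\cE_i$ is multiplication by the section $t\omega_i$ of $\Omega$, which is exactly the $\cO_{C_{a_t}}$-module structure on $\iota_{i,*}\cE_i$.

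Part (4) is the main technical point and will require the most care. The formula is the standard description of a coherent sheaf on the reducible (and non-reduced) curve $C_{a_t}$ in terms of its restrictions to the closed subschemes given by the sections $t\omega_i$ together with gluing data on the pairwise intersections $D_{ij}$. I would build the map explicitly from the filtration of (3): the twists by $\Omega^{r-i}$ encode the grading shift coming from the chain structure (so that on the component $i$ the $\cO_{C_{a_t}}$-action matches the successive compositions $\theta_{ij}$), and the quotient by $\phi_{ij}(\cE_i|_{D_{ij}})$ in the target records the identification $(e_i,\theta_{ij}(e_i))$ of the images in $\cE_i|_{D_{ij}}\oplus\cE_j|_{D_{ij}}$ forced by the Higgs field on the overlap, where $\omega_i=\omega_j$. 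After assembling the map, I would check exactness locally: at a point of $C$ away from all $D_{ij}$ the spectral curve is reduced and the statement follows from the BNR correspondence applied to each summand separately; at a point of $D_{ij}$ one has to do a local computation in the completed local ring, using that $\omega_i-\omega_j$ has a simple zero there so the two components meet transversally and the gluing is encoded by a single linear condition per pair $(i,j)$. The main obstacle I anticipate is precisely this local bookkeeping of $\Omega$-twists and the compositions $\theta_{ij}$, which determine both the correct shape of the source $\bigoplus\iota_{i,*}\cE_i\tensor\Omega^{r-i}$ and the exact form of $\phi_{ij}$ so that the kernel description is naturally isomorphic to $\cE_{t\omega}$ and not merely to an abstract twist of it.
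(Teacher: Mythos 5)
Your parts (1)--(3) are fine and follow the paper's own route: stability at $t=0$ together with openness of stability and the $\bG_m$--equivariance $(\cE,\theta_{t\omega})\cong(\cE,t\theta_{\omega})$ give (1), and the observation that $\cF_i=\oplus_{j\leq i}\cE_j$ is a filtration by Higgs subbundles on whose graded pieces $\theta_{t\omega}$ acts by $\id\tensor t\omega_i$ gives (2) and (3).

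The problem is (4), which is the only part of the lemma with real content, and there your text is a plan rather than a proof: you say you would ``build the map explicitly from the filtration'' and ``check exactness locally,'' and you yourself identify the bookkeeping of the $\Omega$-twists and of $\phi_{ij}$ as the main obstacle, but you never produce the map or carry out the verification. That missing step is exactly what the paper's proof supplies: it first notes the gluing map is surjective (the $D_{ij}$ are disjoint), so the kernel is a vector bundle on $C$ of rank $n$ whose degree is computed to equal $\deg\cE$; it then writes down an explicit morphism $\Phi\colon \cE_{\omega}\to\oplus_i \iota_{i,*}\cE_i\tensor\Omega^{r-i}$ with components $\Phi_{ij}=\theta_{ji}\tensor\prod_{k>j}(\omega_i-\omega_k)$ for $i\leq j$ (generically the inverse of the eigensheaf decomposition, the product factors clearing poles and accounting precisely for the twists $\Omega^{r-i}$), checks that $\Phi$ intertwines $\theta_{\omega}$ with the Higgs field $\oplus\,\omega_i$ on the target and that its image lands in the kernel, and concludes by injectivity plus the rank/degree count. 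In addition, two of the local claims you lean on are not right as stated: away from $\cup D_{ij}$ the spectral curve is \emph{not} reduced (each component is the $(n_i-1)$-st infinitesimal neighborhood wherever $n_i\geq 2$); what is true there, and needs an argument (e.g.\ that $\theta-\omega_i$ is invertible on the complementary graded pieces, so the filtration splits into honest eigensheaves scheme-theoretically supported on the reduced sections), is the decomposition $(\cE_{\omega},\theta_{\omega})\cong\oplus_i(\cE_i,\omega_i\cdot\id)$ --- ``BNR applied to each summand'' does not provide this. Likewise at a point of $D_{ij}$ the gluing condition cuts out $n_j$ conditions inside the rank $n_i+n_j$ fiber, not ``a single linear condition per pair.'' So as written the proposal does not establish (4).
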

\begin{proof}
(1) holds, because $\cE_{0\omega}$ is stable by assumption and stability is an open $\bG_m$--invariant condition. (2) and (3) are immediate, because the subbundles $\oplus_{j=0}^i \cE_j \subset \oplus_{j=0}^r \cE_{j}$  are Higgs subbundles of $\cE_{t\omega}$ by construction and the Higgs field induces the map $\id_{\cE_i}\tensor t\omega_i$ on the graded quotients. 

(4) follows from the eigenspace decomposition of $\theta_{t\omega}$ over $C-\cup_{i<j} D_{ij}$: Replacing $\omega_i$ by $t\omega_i$ we may assume that $t=1$ and drop the index $t$.
First note that the map 
$$ \bigoplus_{i} \iota_{i,*} \cE_i \tensor \Omega^{r-i} \to \bigoplus_{i>j} \big(\iota_{i,*} \cE_i \tensor \Omega^{r-i} \oplus \iota_{j,*} \cE_j \tensor \Omega^{r-j}\big)|_{D_{ij}}/ (id,\theta_{ij})(\cE_i \tensor \Omega^{r-i})|_{D_{ij}}$$
is surjective, because the divisors $D_{ij}$ were chosen to be disjoint, so that the restriction map $\cE_j  \to \oplus_{i>j} \cE_j|_{D_{ij}}$ is surjective. Thus the kernel of this map is a Higgs bundle of rank $n$ and degree 
$$ \sum_i d_i + n_i(r-i) (2g-2) - \sum_{i>j} n_j (2g-2) = \deg(\cE).$$

Let us define an injective Higgs bundle morphism $\Phi \colon \cE_\omega  \to\oplus \iota_{i,*} \cE_i \tensor \Omega^{r-i}$ such that the restriction of $\Phi$ to the generic point of $C$ is the inverse of the inclusion of the direct sum of the eigenspaces $\cE_i$ of $\theta_{\omega}$. Concretely define the components $\Phi_{ij}\colon \cE_j \to \iota_{i,*} \cE_i \tensor \Omega^{r-i}$ 
to be 
$$\Phi_{ij} := \theta_{ji} \tensor \prod _{k>j}(\omega_i-\omega_k)$$
for $i\leq j$ and $\Phi_{ij}=0$ otherwise.

Then we have 
\begin{align*}
\Phi(\theta(e_j))_i& = \omega_j \Phi(e_j)_i + \Phi(\theta_j(e_j))_i \\
&= \omega_j\prod _{k>j}(\omega_i-\omega_k) \theta_{ji}(e_j) +  \prod _{k>{j-1}}(\omega_i-\omega_{k}) \theta_{j-1,i} \circ \theta_j(e_j)\\
&=\omega_i \prod _{k>j}(\omega_i-\omega_k) \theta_{ji}(e_j)\\
&= \omega_i \Phi(e_j)_i
\end{align*}
So the morphism is a morphism of Higgs bundles. Moreover the image of $\Phi$ is contained in the kernel because for all $i>j$ and all $l$ we have:
\begin{align*} 
(\Phi(e_l)_j)|_{D_{ij}} &=  \big(\prod _{k>l}(\omega_j-\omega_k) \theta_{lj} (e_l)\big)|_{D_{ij}}  = \prod _{k>l}(\omega_j-\omega_k) \theta_{ij}\theta_{li} (e_l)|_{D_{ij}}\\
&\eqweil{\omega_i-\omega_j|_{D_{ij}}=0}  \prod _{k>l}(\omega_i-\omega_k) \theta_{ij}\theta_{li} (e_l) |_{D_{ij}} = \theta_{ij}(\Phi(e_l)_i )|_{D_{ij}}.
\end{align*}

\end{proof}

The description (4) of  $\cE_{t\omega}$ indicates how we can find a component of $h^{-1}(a_t)$ that contains the bundle $(\cE_{t\omega},\theta_{t\omega})$: Consider the stack
$$\prod_{i>j} \Hom (\cE_i^\univ|_{D_{ij}},\cE_j^\univ\tensor \Omega^{i-j}|_{D_{ij}}) \to \prod \Bun_{n_i}^{d_i}$$
parametrizing collections $(\cE_i; \phi_{ij})$ with $\cE_i\in \Bun_{n_i}^{d_i}$ and $\phi_{ij}\colon \cE_i|_{D_{ij}} \to \cE_j\tensor \Omega^{i-j}|_{D_{ij}}$.
The formula in (4) defines a map $$F\colon \prod \Hom (\cE_i^\univ|_{D_{ij}},\cE_j^\univ\tensor \Omega^{i-j}|_{D_{ij}})  \to h^{-1}(a_t) \subset \Higgs_{n}^d.$$

The dimension of this stack is $$\sum_i n_i^2 (g-1) + \sum_{i>j}  n_in_j (2g-2) =n^2 (g-1) = \dim h^{-1}(a_t).$$ Moreover, since the stable Higgs bundle $\cE_{t\omega}$ is contained in the image of this map, a nonempty open subset of $\prod \Hom (\cE_i^\univ|_{D_{ij}},\cE_j^\univ|_{D_{ij}})$ maps to $M_{n}^{d}$. 

\begin{lemma}\label{Pnd}
The morphism $F$ is an embedding. The image 
is the substack of those Higgs bundles $(\cE,\theta)$ that admit a filtration $(\cF_\bullet,\theta_\bullet)\subset (\cE,\theta)$ such that for all $i$ we have $\cF_i/\cF_{i-1}\in \Bun_{n_i}^{d_i}$ and the Higgs field induces $\id\tensor t\omega_i$ on this subquotient.
\end{lemma}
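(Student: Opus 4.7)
The plan is to construct a two-sided inverse to $F$ on the substack $\cP\subset \Higgs_n^d$ of Higgs bundles admitting a filtration as described; this simultaneously establishes that $F$ is an embedding and identifies its image with $\cP$.

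\emph{Image containment.} Given an $S$-family $(\cE_i;\phi_{ij})$ of source data, Lemma~\ref{stabledef}(3) exhibits a functorial Higgs subbundle filtration of $F(\cE_i;\phi_{ij})$ whose graded pieces are the $\cE_i$ with induced Higgs field $\id\tensor t\omega_i$. Consequently $F$ factors through $\cP$.

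\emph{Reconstruction of the inverse.} Conversely, suppose $(\cE,\theta)\in\cP$. First I claim the filtration $\cF_\bullet$ is uniquely determined by $(\cE,\theta)$: since the forms $\omega_0,\dots,\omega_r$ are pairwise distinct, any two such filtrations agree on the open set $U := C\setminus\bigcup_{k<l}D_{kl}$ (being the eigenvalue filtration for the pairwise distinct eigenvalues $t\omega_i$ of $\theta$), and by saturating inside the vector bundle $\cE$ they must coincide globally. Hence we may recover each $\cE_i$ as the subquotient $\cF_i/\cF_{i-1}$, on which $\theta$ acts as $t\omega_i\,\id$, so it descends uniquely along $\iota_i$ to a bundle in $\Bun_{n_i}^{d_i}$. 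To recover $\phi_{ij}$: over $U$ the filtration splits canonically, giving $\cE|_U\cong\bigoplus_k\cE_k|_U$ as Higgs bundles, and the obstruction to extending this canonical splitting across $D_{ij}$ produces a morphism in $\Hom(\cE_i|_{D_{ij}},\cE_j\tensor\Omega^{i-j}|_{D_{ij}})$ (the twist arising from the relative scaling of the eigenlines $t\omega_i,t\omega_j$ near $D_{ij}$). These constructions are manifestly functorial and hence define a morphism
$$G\colon \cP \longrightarrow \prod_{i>j} \Hom(\cE_i^\univ|_{D_{ij}},\cE_j^\univ\tensor \Omega^{i-j}|_{D_{ij}}).$$

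\emph{Conclusion and obstacle.} The identity $G\circ F=\id$ is a direct reading of the formula for $\Phi$ in the proof of Lemma~\ref{stabledef}(4). The identity $F\circ G=\id$ is a local statement on $C$: over $U$ it is the canonical eigenspace decomposition, while on a formal neighbourhood of $D_{ij}$ both sides describe the same extension of $\iota_{j,*}\cE_j$ by $\iota_{i,*}\cE_i$ controlled by the recovered gluing datum. Hence $F$ is an isomorphism onto $\cP$, proving both claims. The main obstacle is the explicit local matching at $D_{ij}$: one must verify that the obstruction to splitting recovered intrinsically from $(\cE,\theta)$ on a formal neighbourhood of $D_{ij}$ agrees, accounting correctly for the twist by $\Omega^{i-j}$, with the gluing morphism $\phi_{ij}=(\id,\prod\theta_{ij})$ of the kernel description. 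This is a direct, if delicate, computation at the non-reduced self-intersection of the spectral curve $C_{a_t}$.
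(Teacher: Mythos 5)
Your overall strategy is the same as the paper's: both arguments prove the lemma by constructing an inverse to $F$ on the substack of filtered Higgs bundles, and your preliminary steps are sound --- $F$ factors through that substack, and the filtration is unique because its restriction to $U=C\setminus\bigcup_{k<l}D_{kl}$ is forced by the (generalized) eigenspace decomposition of $\theta$ while each step is saturated, the subquotients being locally free. (The paper obtains the same filtration canonically on any torsion-free sheaf on the spectral curve, as $\cF_{r-1}=\ker\big(\cF\to\iota_{r,*}(\iota_r^*\cF/\text{torsion})\big)$, applied inductively.)

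However, there is a genuine gap at the heart of your argument: the map $G$ is never actually constructed. You assert that ``the obstruction to extending the canonical splitting across $D_{ij}$ produces a morphism in $\Hom(\cE_i|_{D_{ij}},\cE_j\tensor\Omega^{i-j}|_{D_{ij}})$'', but a priori that obstruction is a class in an $\Ext^1$-group in the category of Higgs sheaves, and the entire content of the lemma is the identification of that group with the asserted direct sum of $\Hom$'s along the divisors, together with the check that under this identification the extension attached to a tuple $(\phi_{ij})$ is exactly the kernel in the formula defining $F$. This is precisely what the paper's proof supplies: the extension class of $\cF_{r-1}\subset\cE$ lies in $\bH^1$ of the complex $[\cHom(\cE_r,\cF_{r-1})\map{[\quad,\theta]}\cHom(\cE_r,\cF_{r-1})\tensor\Omega]$; since the generic eigenvalues are distinct the differential is injective, so the complex is quasi-isomorphic to its cokernel, which (using that the $\omega_i-\omega_r$ have simple zeros and that the $D_{ij}$ are disjoint) is $\oplus_{i<r}\cHom\big(\cE_r|_{D_{ir}},(\cF_i/\cF_{i-1})\tensor\Omega|_{D_{ir}}\big)$; the Yoneda description of $\Ext^1$ then matches a tuple $(\phi_{ir})$ with exactly the kernel extension, and one concludes by induction on $r$. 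Your ``obstacle'' paragraph defers exactly this computation, including the bookkeeping of the twists $\Omega^{i-j}$ (which arise from the vanishing of $\omega_i-\omega_j\in H^0(C,\Omega)$ along $D_{ij}$ and the normalization $\cE_i\tensor\Omega^{r-i}$ in the kernel formula, not from a ``relative scaling of eigenlines''). As written, the proposal is a correct plan whose decisive step is missing.
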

\begin{notation} We will denote the image $F(\prod \Hom (\cE_i^\univ|_{D_{ij}},\cE_j^\univ\tensor \Omega^{i-j}|_{D_{ij}}) \subset h^{-1}(a_t)$ by $\Pic_{\un{n},a_t}^{\un{d}}$.
\end{notation}

\begin{proof}
We will prove the statement by constructing an inverse map, defined on the image of the embedding. Again we may assume that $t=1$ and drop the index $t$. Any torsion free sheaf $\cF$ on the spectral curve $C_{a}$ admits a canonical subsheaf $$\cF_{r-1}:= \ker\big( \cF \to \iota_{r,*}(\iota_r^* \cF/\text{torsion})\big).$$
This is the unique subsheaf such that $\cF/\cF_{r-1}$ is torsion free, supported on $C_{\omega_r}$ and such that $\cF_{r-1}$ is supported on $\cup_{i=0}^{r-1} C_{\omega_i}$.
We can apply this inductively to $\cF_{r-1}$ to obtain a filtration $\cF_\bullet$ of $\cF$.

The substack of those torsion free sheaves $\cF$ on $C_a$ for which rank and degree of the $\cF_{i}$ are fixed is a locally closed substack of $h^{-1}(a)$ and it maps  
to $\prod \Higgs_{n_i}^{d_i}$ by taking subquotients $\gr_i \cF_\bullet$. Fixing the Higgs field to be equal to $\id\tensor \omega_i$ on these subquotients is a closed condition. 

Moreover the extension class of the Higgs bundles $\cF_{r-1} \to \cF \to \cE_r$ is an element in 
$$\bH^1(C,[\cHom(\cE_r,\cF_{r-1}) \map{[\quad,\theta]} \cHom(\cE_r,\cF_{r-1})\tensor \Omega]).$$
The map $[\quad,\theta]$ is an injective map of $\cO_C$ modules, because by assumption the eigenvalues of $\theta$ on $\cE_r|_{k(C)}$ and $\cF_{r-1}|_{k(C)}$ are distinct. 
The cokernel is $$\cHom\big(\cE_r,\cF_{r-1}\tensor \Omega/(\theta-\id\tensor \omega_r)(\cF_{r-1})\big).$$ Since the eigenvalues of $\theta-\omega_r$ vanish only at the divisors $D_{ir}$ we find that $$\cF_{r-1}\tensor \Omega/(\theta-\id\tensor \omega_r)(\cF_{r-1}) \cong \oplus_{i<r} (\cF_{i}/\cF_{i-1} \tensor \Omega)|_{D_{ir}}.$$
Thus we have 
\begin{align*}
[\cHom(\cE_r,\cF_{r-1}) \map{[\quad,\theta]} \cHom(\cE_r,\cF_{r-1}\tensor \Omega)] &\cong [ 0 \to \cHom(\cE_r, \cF_{r-1}\tensor \Omega/(\theta-\id\tensor \omega_r)(\cF_{r-1}))] \\
 &=[0 \to \oplus_{i<r} \cHom( \cE_r|_{D_{ir}}, \cF_i/\cF_{i-1}|_{D_{ir}} \tensor \Omega)].
\end{align*}
Thus $\Ext^1_{\Higgs}(\cE_r,\cF_{r-1}) \cong \oplus_{i<r} \Hom(\cE_{r}|_{D_{ir}},\cF_i/\cF_{i-1}\tensor \Omega|_{D_{ir}})$ and by the Yoneda description of $\Ext^1$ one sees that a collection of  homomorphisms $\phi_{ir}\colon \cE_{r}|_{D_{ir}} \to \cF_i/\cF_{i-1}\tensor \Omega|_{D_{ir}}$ corresponds to the extension given by
$$ 0 \to \cF_{r-1} \to \ker \big(\cE_r \oplus \cF_{r-1} \tensor \Omega \map{\phi-\id} \cF \tensor \Omega/ (\theta-\omega_r)(\cF_{r-1})\big) \to \cE_r\to 0.$$
Inductively the iterated extension of the $\cF_{i}/\cF_{i-1}$ is therefore canonically isomorphic to a kernel
$$ \ker( \oplus \cE_i \tensor \Omega^{r-i} \to \oplus_{i<j} \cE_i \tensor \Omega^{r-i} |_{D_{ij}}).$$
This defines an inverse map to $F$. 
\end{proof}

\begin{corollary}
The open substack $\Pic_{\un{n},a_t}^{\un{d},\st} \subset \Pic_{\un{n},a_t}^{\un{d}}$ consisting of stable bundles is dense in an irreducible component $P_{\un{n},a_t}^{\un{d}} \subseteq h^{-1}(a_t)$.
\end{corollary}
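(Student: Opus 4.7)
The plan is to read off everything from the identification in Lemma \ref{Pnd} together with the existence of a stable point provided by Lemma \ref{stabledef}. Since $F$ is an embedding with image $\Pic_{\un{n},a_t}^{\un{d}}$, I would work instead with the source stack
$$\cS := \prod_i \Bun_{n_i}^{d_i} \text{ equipped with } \prod_{i>j} \Hom (\cE_i^\univ|_{D_{ij}},\cE_j^\univ\tensor \Omega^{i-j}|_{D_{ij}}),$$
and verify three things: that $\cS$ is irreducible, that $\dim \cS = \dim h^{-1}(a_t)$, and that the preimage of the stable locus is non-empty. Together these imply that the closure of $\Pic_{\un{n},a_t}^{\un{d}}$ in $h^{-1}(a_t)$ is an irreducible component in which the stable locus is dense.

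First I would argue irreducibility of $\cS$. The forgetful map $\cS\to \prod_i \Bun_{n_i}^{d_i}$ is a vector bundle: because the $D_{ij}$ are reduced disjoint divisors and $\cE_i^\univ|_{D_{ij}}$, $\cE_j^\univ\tensor \Omega^{i-j}|_{D_{ij}}$ are locally free of constant rank on the (zero-dimensional) divisor, the relevant Hom sheaves are locally free of constant rank along the base. Each $\Bun_{n_i}^{d_i}$ is a smooth irreducible algebraic stack, so the product is irreducible, and hence so is any vector bundle over it.

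Next I would use the dimension computation already recorded in the text, namely
$$\dim \cS=\sum_i n_i^2(g-1)+\sum_{i>j} n_in_j(2g-2)=n^2(g-1)=\dim h^{-1}(a_t).$$
Since $F$ is a locally closed embedding, $\Pic_{\un{n},a_t}^{\un{d}}$ is an irreducible locally closed substack of $h^{-1}(a_t)$ of dimension equal to $\dim h^{-1}(a_t)$. Taking closures in $h^{-1}(a_t)$ produces an irreducible closed substack of maximal dimension, which is therefore an irreducible component; call it $P_{\un{n},a_t}^{\un{d}}$.

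Finally, stability is an open condition on $\Higgs_n^d$, so $\Pic_{\un{n},a_t}^{\un{d},\st}$ is an open substack of the irreducible stack $\Pic_{\un{n},a_t}^{\un{d}}$. By Lemma \ref{stabledef}(1) the Higgs bundle $(\cE_{t\omega},\theta_{t\omega})$ is stable and by Lemma \ref{stabledef}(4) it lies in $\Pic_{\un{n},a_t}^{\un{d}}$, so this open substack is non-empty, hence dense in $\Pic_{\un{n},a_t}^{\un{d}}$, and therefore dense in $P_{\un{n},a_t}^{\un{d}}$. There is no real obstacle here; the entire content has been isolated into Lemmas \ref{stabledef} and \ref{Pnd}, and what remains is the observation that the source of $F$ is a vector bundle over a product of irreducible Bun-stacks.
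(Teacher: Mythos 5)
Your argument is correct and follows the same route as the paper: irreducibility of the parametrizing stack (a bundle of Hom-spaces over the irreducible product $\prod_i\Bun_{n_i}^{d_i}$), the dimension count $\dim = \dim h^{-1}(a_t)$, the stable point from Lemma \ref{stabledef}, and openness of stability. The paper's proof is just a terser version of exactly these steps.
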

\begin{proof}
We have seen above that the stack $ \Pic_{\un{n},a_t}^{\un{d}}$ is irreducible of dimension $\dim \cA$ and by Lemma \ref{stabledef} it contains a stable point $(\cE_{t\omega},\theta_{t\omega})$. Stability is an open condition so $\Pic_{\un{n},a_t}^{\un{d},\st}$ is irreducible and of dimension $\dim \cA=\dim h^{-1}(a_t)$.
\end{proof}

Next we want to understand the closure $\overline{\bG_m \times P_{\un{n},a_1}^{\un{d}} }\subseteq M_{n}^{d}$. Since the closure defines a family over $\bA^1$ it is flat and so the intersection of the closure with $h^{-1}(0)$ is $\bG_m$ invariant, connected and of pure dimension $\dim h^{-1}(0)$. Therefore, it will be sufficient for us to determine for which $\un{m},\un{e}$ the fixed point stratum $F_{\un{m}}^{\un{e}}$ can intersect the closure:

\begin{proposition}\label{closure}
If $\overline{\bG_m \times P_{\un{n},a_1}^{\un{d}}}  \cap F_{\un{m}}^{\un{e}} \neq \emptyset$ we have $\wt(\un{n},\un{d}) \leq \wt(\un{m},\un{e})$ and equality holds if and only if $(\un{n},\un{d})=(\un{m},\un{e})$.
\end{proposition}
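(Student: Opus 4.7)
Following the template of Proposition \ref{closureNilp}, I would produce a $\bG_m$-equivariant chain of rational curves in $M_n^d$ with one endpoint $x_0 \in F_{\un{m}}^{\un{e}}$ and the other a point of $F_{\un{n}}^{\un{d}}$, then read off the inequality from the weight formula of Example \ref{wtMnd}.

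By Lemma \ref{stabledef}, the Higgs bundle $(\cE_\omega, \theta_\omega)$ lies in $P_{\un{n}, a_1}^{\un{d}}$ with $t \to 0$ limit $(\oplus \cE_i, \oplus \theta_i) \in F_{\un{n}}^{\un{d}}$, so $F_{\un{n}}^{\un{d}}$ is contained in the $t = 0$ fiber of the flat family $\overline{\bG_m \times P_{\un{n}, a_1}^{\un{d}}} \to \bA^1$; this fiber is connected and $\bG_m$-invariant and by hypothesis contains $x_0$. Choose a smooth connected curve $g \colon X \to \overline{\bG_m \times P_{\un{n}, a_1}^{\un{d}}}$ with $g(x) = x_0$, $g(X \setminus \{x\}) \subset \bG_m \times P_{\un{n}, a_1}^{\un{d}}$, arranged so that the closure of $g(X)$ meets the orbit of $(\cE_\omega, \theta_\omega)$. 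The map $(t, y) \mapsto t \cdot g(y)$ extends by properness of $h$ to $\tilde f \colon (\bA^1 \times X) \setminus \{(0, x)\} \to M_n^d$ satisfying $\tilde f(\bG_m \times \{x\}) = \{x_0\}$, with $\tilde f(0, y)$ the $t \to 0$ BB-limit of $g(y)$, which we arrange to lie in $F_{\un{n}}^{\un{d}}$ (iterating through intermediate strata if needed, via the connectedness of the $t = 0$ fiber). A sequence of $\bG_m$-equivariant blow-ups at $(0, x)$ extends $\tilde f$ to a morphism $B \to M_n^d$ whose exceptional fiber is an equivariant chain of $\bP^1$'s, with endpoints mapping to $x_0$ and to a point in $F_{\un{n}}^{\un{d}}$. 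Ampleness of $\cL_{M_n^d}$ as used in Example \ref{wtMnd} forces strict monotonicity of $\wt(\bG_m, \cL_{\det}|_{\tilde f(p_i)})$ along the chain, yielding $\wt(\un{n}, \un{d}) \leq \wt(\un{m}, \un{e})$, with equality precisely when the chain is trivial, i.e., $(\un{m}, \un{e}) = (\un{n}, \un{d})$.

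\textbf{Main obstacle.} The most delicate point is arranging the curve $g$ so that the chain's endpoint opposite $x_0$ really lies in $F_{\un{n}}^{\un{d}}$, and verifying that the $\bG_m$-orientation puts $x_0$ at the side of higher $\wt(\cL_{\det})$. A generic point of $P_{\un{n}, a_1}^{\un{d}}$ need not have its $t \to 0$ BB-limit in $F_{\un{n}}^{\un{d}}$ (for instance, when the underlying vector bundle is stable the limit lies in the open stratum $F_{(n)}^{(d)}$), so one must exploit the connectedness and pure dimension $\dim \cA$ of the $t = 0$ fiber to walk back from any $F_{\un{m}}^{\un{e}}$ in the closure to $F_{\un{n}}^{\un{d}}$ through a sequence of $\bG_m$-orbit closures, composing weight inequalities at each step.
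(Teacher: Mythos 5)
Your approach transplants the orbit-closure/weight-monotonicity argument of Proposition \ref{closureNilp}, but the gap you flag as the ``main obstacle'' is in fact fatal, and the remedy you sketch does not close it. In Proposition \ref{closureNilp} the endpoint of the chain of $\bP^1$'s lands in $F_{\un{n}}^{\un{d}}$ \emph{by definition} of $F_{\un{n}}^{\un{d},-}$: the curve is chosen inside $\overline{F_{\un{n}}^{\un{d},-}}$, so the $\bG_m$--flow of its generic point converges into $F_{\un{n}}^{\un{d}}$, and this is what orients the whole chain and makes the weight inequalities compose. For a curve in $\overline{\bG_m\times P_{\un{n},a_1}^{\un{d}}}$ through $x_0$ the generic point flows (as $t\to 0$) into some stratum $F_{\un{m}^\prime}^{\un{e}^\prime}$ that is typically \emph{not} $F_{\un{n}}^{\un{d}}$ (e.g.\ $F_{(n)}^{(d)}$ when the underlying bundle is stable), so the chain only compares $\wt(\un{m},\un{e})$ with $\wt(\un{m}^\prime,\un{e}^\prime)$ --- an inequality that says nothing about $\wt(\un{n},\un{d})$, and in the example above is vacuous since $\wt((n),(d))=0$ is maximal. ``Walking back'' through the connected special fiber does not help: monotonicity of $\wt(\bG_m,\cL_{\det})$ holds only along individual oriented flows, and along an arbitrary connected path of orbit closures the weight goes up and down, so the step-by-step inequalities do not compose into a comparison of the two ends. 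Worse, knowing that from $x_0$ one can flow inside the closure into $F_{\un{n}}^{\un{d}}$ is essentially the statement that $\overline{F_{\un{n}}^{\un{d},-}}$ (and nothing of smaller weight) lies in $\overline{\bG_m\times P_{\un{n},a_1}^{\un{d}}}\cap h^{-1}(0)$, which is Corollary \ref{ClosureComponents} --- deduced in the paper \emph{from} the proposition you are trying to prove; so the proposed fix is close to circular.

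The paper's actual proof does not use BB-chains at all but the modular description of $P_{\un{n},a_t}^{\un{d}}$: every point carries a canonical filtration by Higgs subsheaves on whose subquotients $\theta-\id\tensor t\omega_i=0$; by properness of Quot schemes such a filtration (possibly non-saturated) persists on any $(\cE,\theta)$ in the closure, and saturating only raises degrees, hence raises $\wt$ since $\wt(\un{n},\un{d})=-2\sum_i\deg\cHom(\tilde{\cE}_i,\cE/\tilde{\cE}_i)$. A further limit argument makes the filtration compatible with the grading $\cE=\oplus\cE_i^\prime$ at the fixed point, and the key lemma compares the two weights by showing their difference is the weight attached to the diagonal filtration $\cF_s=\oplus_{i-j=s}\tilde{\cE}_{ij}$, which consists of Higgs subbundles; stability of $(\cE,\theta)$ then forces this difference to be strictly positive unless the filtration is the canonical one $\oplus_{i\leq j}\cE_i$, giving both the inequality and the equality analysis. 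If you want to salvage your write-up, you would need to replace the chain-of-$\bP^1$'s step by some version of this filtration/saturation argument (or otherwise produce, for each $x_0$, a curve in the closure whose generic flow provably ends in $F_{\un{n}}^{\un{d}}$), which is precisely the missing content.
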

\begin{proof}
By construction any Higgs bundle $(\cE,\theta) \in \Pic_{\un{n},a_t}^{\un{d},\st}$ has a canonical filtration $\tilde{\cE}_0 \subset \tilde{\cE}_1 \subset \dots \subset \tilde{\cE}_r=\cE$ by Higgs subbundles such that on $\tilde{\cE}_i/\tilde{\cE}_{i-1}$ we have $\theta - \id \tensor t \omega_i=0$. Since Quot schemes are proper this implies that any $(\cE,\theta)$ in the closure $\overline{\bG_m \times P_{\un{n},a_1}^{\un{d}}}$ admits a similar filtration $\tilde{\cE}_i \subset \cE$ by Higgs subsheaves such that $\theta - \id \tensor t \omega_i=0$ holds at the generic point and therefore on the torsion free part of the subquotient.

For the saturation  $\tilde{\cE}_i^{\sat}\subset \cE$ we know that $\deg(\tilde{\cE}_i^{\sat}) \geq \deg(\tilde{\cE}_i)$ and since 
$$\wt(\un{n},\un{d}) = -2 \sum_{i=0}^{r-1} \deg( \cHom(\tilde{\cE}_i,\cE/\tilde{\cE}_i))$$
we find that $\wt(\gr(\tilde{\cE}_\bullet^{\sat})) \geq \wt(\un{n},\un{d})$ and on the graded subquotients $\tilde{\cE}_i^{\sat}/\tilde{\cE}_{i-1}^{\sat}$ we still have $\theta - \id \tensor t \omega_i=0$.

For $(\cE=\oplus \cE_i^\prime,\theta) \in  \overline{\bG_m \times P_{\un{n},a_1}^{\un{d}}}  \cap F_{\un{m}}^{\un{e}}$ we claim that we can furthermore assume that the filtration $\tilde{\cE}_i^{\sat}\subset \cE$ is compatible with the canonical $\bG_m$ action on $\oplus \cE_i^\prime$: This holds, because the action identifies $(\cE,\theta) \cong (\cE,t\theta)$ and therefore induces a $\bG_m$ action on the space of Higgs subsheaves of $(\cE,\theta)$. Since this space is proper we can again pass to the closure of a $\bG_m$ orbit in this space to find a filtration $\tilde{\cE}_j^\pprime \subset \oplus \cE_i$ such that $\tilde{\cE_j}^\pprime= \oplus_i \cE_i \cap \tilde{\cE}_j^\pprime $. 

Thus the proposition will follow from:
\begin{lemma}
Let $(\cE=\oplus_i \cE_i,\theta) \in F_{\un{m}}^{\un{e}}$ be a stable Higgs bundle, fixed under the $\bG_m$ action. Let $\tilde{\cE}_0^\pprime \subset \dots \subset \tilde{\cE}_{r^\pprime}=\cE$ be a filtration by Higgs subbundles such that  $\tilde{\cE}_j^\pprime = \oplus_i \cE_i \cap \tilde{\cE}_j^\pprime$ and such that $\theta$ induces the $0$ map on the associated graded bundles $\cE_j^\pprime:=\tilde{\cE}_j^\pprime/\tilde{\cE}_{j-1}^\pprime$.
Denote by $\un{n}^\pprime :=\rk(\gr(\tilde{\cE}_\bullet^\pprime)), \un{d}^\pprime:=\deg(\gr(\tilde{\cE}_\bullet^\pprime))$, 
then we have $$\wt( \un{m},\un{e}) \geq \wt(\un{n}^\pprime,\un{d}^\pprime)$$  and equality only holds if $\tilde{\cE_j}^\pprime = \oplus_{i=0}^j \cE_i$.
\end{lemma}
\begin{proof}
We will show that the difference of the weights is equal to the weight defined by another filtration $\cF_\bullet \subset \cE$ consisting of Higgs subbundles. Since $\cE$ is stable this weight will turn out to be negative.

Let us denote by $\tilde{\cE}_{ij}:= \cE_i \cap \cE_j^\pprime $ and by $\cE_{ij}:=\tilde{\cE}_{ij}/\tilde{\cE}_{ij-1}$ the associated graded quotients.  

Let us denote by $\gr \cE_{\bullet\bullet} := \oplus_{i,j} \cE_{ij}$ the associated graded bundle. The canonical action of $\bG_m$ on the summands $\cE_{ij}$ defines a natural map $\prod_{i,j} \bG_m \to \Aut (\gr \cE_{\bullet\bullet})$. Since the line bundle $\cL_{\det}$ defines a character  $\Aut(\gr \cE_{\bullet\bullet}) \to \bG_m= \Aut(\cL_{\det}|_{\gr \cE_{\bullet\bullet}})$ this induces a homomorphism:
$$\un{\wt} \colon \Hom(\bG_m, \prod_{i,j} \bG_m)=\bZ^{r\times r^\pprime} \to \Hom(\bG_m,\bG_m)=\bZ.$$
If we denote by $L=(l_{ij}) \in \bZ^{r\times r^\pprime}$ the matrix $l_{ij}=i$ and $C:=(c_{ij})$ with $c_{ij}=j$ we have by definition:
$$\wt(\un{m},\un{e}) - \wt(\un{n}^\pprime,\un{d}^\pprime) = \un{\wt}(L)- \un{\wt}(C) = \un{\wt}(L-C).$$
The matrix $L-C=(d_{ij})$ is given by $d_{ij} =i-j$.  We know that for all $i,j$ the Higgs field $\theta$ induces a map $\tilde{\cE}_{ij} \to \tilde{\cE}_{i-1,j-1}$ because $\theta$ maps $\cE_j^\pprime$ into $\cE_{j-1}^\pprime$ and $\cE_i $ into $\cE_{i-1}$. In particular for any $s$ the bundle $\cF_s:=\oplus_{i-j=s} \tilde{\cE}_{ij}$ is a Higgs subbundle of $\cE$ and therefore satisfies 
$$ \deg(\cHom (\oplus_{i-j=s} \tilde{\cE}_{ij}, \cE/\oplus_{i-j=s} \tilde{\cE}_{ij}) ) >0,$$
whenever $0\subsetneq \oplus_{j-i=s} \tilde{\cE}_{ij}\subsetneq \cE$ is a proper subbundle. We can rewrite this as:
\begin{align*}
 \deg(\cHom (\oplus_{i-j=s} \tilde{\cE}_{ij}, \cE/\oplus_{i-j=s} \tilde{\cE}_{ij}) ) &= \deg \cHom(\oplus_{i-j \geq s} \cE_{ij} , \oplus_{k-l<s} \cE_{kl}) >0.
\end{align*}
Thus $$\un{\wt}(L-C)= \sum_{i,j,k,l} \big((k-l)-(i-j)\big) \deg(\cHom(\cE_{ij},\cE_{kl}))$$ is the weight defined by the filtration $\cF_{-r^\prime} \subset \dots \subset \cF_{r} = \cE$. This filtration is non-trivial, unless  $\tilde{\cE}_i^\pprime = \oplus_{j=0}^i \cE_j$ and if it is a non-trivial filtration we obtain:
$$\un{wt}(L-C) = -2 \sum_{s}  \deg(\cHom (\oplus_{i-j=s} \tilde{\cE}_{ij}, \cE/\oplus_{i-j=s} \tilde{\cE}_{ij}) ) <0.$$
And therefore $\wt(\un{m},\un{e}) > \wt(\un{n}^\pprime,\un{d}^\pprime) $ as claimed.
\end{proof}
This also finishes the proof of Proposition \ref{closure}.
\end{proof}

\begin{corollary}\label{ClosureComponents}
If $F_{\un{n}}^{\un{d}}\neq \emptyset$ and $\un{n}\neq (n)$ then the intersection $\overline{\bG_m \times P_{\un{n},a_1}^{\un{d}}} \cap h^{-1}(0)$ is a union of components $\overline{F_{\un{m}}^{\un{e},-}}$.
The component $\overline{F_{\un{n}}^{\un{d},-}}$ is contained in this intersection and for $(\un{m},\un{e}) \neq (\un{n},\un{d})$ the component  $\overline{F_{\un{m}}^{\un{e},-}}$
can only occur if $\wt(\un{m},\un{e}) > \wt(\un{n},\un{d})$.
\end{corollary}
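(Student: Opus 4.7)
The proof would proceed in three stages, combining the flatness remark preceding the corollary with Propositions \ref{closure} and \ref{closureNilp}.

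First, I would show that the intersection $I:=\overline{\bG_m\times P_{\un{n},a_1}^{\un{d}}}\cap h^{-1}(0)$ is a union of components $\overline{F_{\un{m}}^{\un{e},-}}$. Flatness of the family $\overline{\bG_m\times P_{\un{n},a_1}^{\un{d}}}\to\bA^1$ (noted just before the statement) makes $I$ a closed subset of $h^{-1}(0)$ of pure dimension $\dim h^{-1}(0)=\dim\cA$. Assumption \ref{assum1} together with the Bialynicki-Birula analysis of Section \ref{notations} gives $h^{-1}(0)=\bigcup\overline{F_{\un{m}}^{\un{e},-}}$ with each $\overline{F_{\un{m}}^{\un{e},-}}$ irreducible of dimension $\dim\cA$, so any pure $\dim\cA$-dimensional closed subset of $h^{-1}(0)$ is automatically a union of some of these components.

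Second, for the weight inequality: whenever $\overline{F_{\un{m}}^{\un{e},-}}\subseteq I$, we have $F_{\un{m}}^{\un{e}}\subseteq\overline{F_{\un{m}}^{\un{e},-}}\subseteq\overline{\bG_m\times P_{\un{n},a_1}^{\un{d}}}$, so $\overline{\bG_m\times P_{\un{n},a_1}^{\un{d}}}\cap F_{\un{m}}^{\un{e}}\neq\emptyset$ and Proposition \ref{closure} gives $\wt(\un{n},\un{d})\le\wt(\un{m},\un{e})$ with equality only when $(\un{m},\un{e})=(\un{n},\un{d})$.

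The remaining step is to verify that $\overline{F_{\un{n}}^{\un{d},-}}$ actually occurs among the components. I would use the explicit section of the family provided by Lemma \ref{stabledef}: the map $t\mapsto(\cE_{t\omega},\theta_{t\omega})$ is a morphism $\bA^1\to M_n^d$ whose value at $t=0$ is $(\oplus\cE_i,\oplus\theta_i)\in F_{\un{n}}^{\un{d}}$, so this fixed point lies in $I$ and hence in some component $\overline{F_{\un{m}}^{\un{e},-}}$ of $I$. This gives $F_{\un{n}}^{\un{d}}\cap\overline{F_{\un{m}}^{\un{e},-}}\neq\emptyset$, and Proposition \ref{closureNilp}, applied with the indices swapped, yields the opposite inequality $\wt(\un{m},\un{e})\le\wt(\un{n},\un{d})$. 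Together with the previous step this forces equality, and the equality case of either proposition gives $(\un{m},\un{e})=(\un{n},\un{d})$.

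The main conceptual content is the coordination of the two opposite monotonicity statements: Proposition \ref{closure} bounds the weight of any component of $I$ from below by $\wt(\un{n},\un{d})$, while Proposition \ref{closureNilp}, applied through the canonical fixed point $(\cE_{0\omega},\theta_{0\omega})$, supplies a matching upper bound for at least one such component; the rigidity in the equality cases then pins down that distinguished component as $\overline{F_{\un{n}}^{\un{d},-}}$.
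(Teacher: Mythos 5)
Your proposal is correct and follows essentially the same route as the paper: flatness of the family over $\bA^1$ gives pure dimension and hence a union of components $\overline{F_{\un{m}}^{\un{e},-}}$, Proposition \ref{closure} gives the weight lower bound, and the presence of the fixed point $(\cE_{0\omega},\theta_{0\omega})\in F_{\un{n}}^{\un{d}}$ in the closure combined with Proposition \ref{closureNilp} forces the component $\overline{F_{\un{n}}^{\un{d},-}}$ to occur. Your write-up merely makes explicit two points the paper leaves implicit (that $F_{\un{m}}^{\un{e}}\subseteq\overline{F_{\un{m}}^{\un{e},-}}$, and that the section $t\mapsto(\cE_{t\omega},\theta_{t\omega})$ is what places a point of $F_{\un{n}}^{\un{d}}$ in the intersection), which is consistent with the paper's phrase ``by construction''.
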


\begin{proof}
Since the map $h$ is proper and flat the closure $\overline{\bG_m \times P_{\un{n},a_1}^{\un{d}}}$ defines a proper flat family over $\bA^1$. Therefore the intersection $\overline{\bG_m \times P_{\un{n},a_1}^{\un{d}}} \cap h^{-1}(0)$  is equidimensional
of dimension $\dim  P_{\un{n},a_1}^{\un{d}} = \dim h^{-1}(0)$, so that it must be a union of irreducible components. The irreducible components of $h^{-1}(0)$ are of the form $\overline{F_{\un{m}}^{\un{e},-}}$ by Assumption $(\star_n)$.  

By Proposition \ref{closure} we know that for $(\un{m},\un{e}) \neq (\un{n},\un{d})$ the component $\overline{F_{\un{m}}^{\un{e},-}}$ can only occur if $\wt(\un{m},\un{e}) > \wt(\un{n},\un{d})$. By construction we also know that the fixed point component $F_{\un{n}}^{\un{d}}$ occurs in the closure. Since we have also seen (Proposition \ref{closureNilp}) that $F_{\un{n}}^{\un{d} }\cap \overline{F_{\un{m}}^{\un{e},-}} = \emptyset$ if   $\wt(\un{m},\un{e}) > \wt(\un{n},\un{d})$, this implies that the component $\overline{F_{\un{n}}^{\un{d},-}}$ has to be contained in $\overline{\bG_m \times P_{\un{n},a_1}^{\un{d}}} \cap h^{-1}(0)$. 
\end{proof}

\subsection{Conclusion of the argument.}

Finally, we want to translate the geometric results on the irreducible components proven so far, into results for cycle classes. Since we want to deduce results on $M_{\PGL_n}^d$ we will need to introduce some more notation. We know that $M_{\PGL_n}^d$ is the quotient of $M_n^d$ by the action of $T^*\Pic_C$ and the Hitchin base $\cA_{\PGL_n}$ is the quotient of $\cA$ by the translation action of $H^0(C,\Omega_C)$. Thus, for any $a\in \cA$ mapping to $\overline{a} \in \cA_{\PGL_n}$ the irreducible components of $h^{-1}(\overline{a})$ are the quotients of $h^{-1}(a)$ by the action of $\Pic_C$. We write $\bP P_{\un{n},a}^{\un{d}}$ and $\overline{\bP F}_{\un{n}}^{\un{d},-}$ for the components of $h^{-1}(\overline{a})$ that are the images of $P_{\un{n},a}^{\un{d}}$ and of $\overline{F}_{\un{n}}^{\un{d},-}$ respectively.

\begin{lemma}\label{cycles}
Let $\un{n},\un{d}$ be such that $\un{n}\neq (n)$ and $F_{\un{n}}^{\un{d}}\neq \emptyset$. Then we have an equality of cycle classes 
$$[P_{\un{n},a}^{\un{d}}]=[h^{-1}(0) \cap \overline{\bG_m \times \Pic_{\un{n},a}^{\un{d},\st}} ] \in H^*_c(M_n^d).$$
Moreover for any $\un{m},\un{e}$ with $\wt(\un{m},\un{e}) > \wt(\un{n},\un{d})$ there exist integers $a_{\un{m},\un{e}}\in \bN_0$ and $a_{\un{n},\un{d}} \in \bN_{> 0}$ such that
$$[P_{\un{n},a}^{\un{d}}] =a_{\un{n},\un{d}}  [\overline{F}_{\un{n}}^{\un{d},-}] + \sum_{\un{m},\un{e} \atop \wt(\un{m},\un{e})>\wt(\un{n},\un{d})}   [\overline{F}_{\un{m}}^{\un{e},-}]  \in H^*_c(M_n^d).$$

The same results hold for the classes $[\bP P_{\un{n},a}^{\un{d}}],[\overline{\bP F}_{\un{n}}^{\un{d},-}] \in H^*_c(M_{\PGL_n}^d)$.
\end{lemma}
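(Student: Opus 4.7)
The strategy is to realize both sides of the first identity as fibers of a single flat proper family over $\bA^1$, apply specialization of cycle classes in $H^*_c$, and then use Corollary~\ref{ClosureComponents} to decompose the special fiber.

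Concretely, I would form the scheme-theoretic closure
$$\cP \;:=\; \overline{\{(t,\,t\cdot y) : t\in \bG_m,\ y\in \Pic_{\un{n},a}^{\un{d},\st}\}} \;\subset\; \bA^1\times M_n^d$$
and let $\pi\colon \cP\to\bA^1$ be the first projection. Properness of $\pi$ reduces to properness of $h$: the closed subvariety $Z=\{(t,a_t):t\in\bA^1\}\subset\bA^1\times\cA$ is abstractly $\bA^1$, the image of $\cP$ under $\id\times h$ lies in $Z$, and $(\id\times h)^{-1}(Z)$ is proper over $Z$ because $h$ is proper, so $\pi$ restricted to the closed subscheme $\cP$ is proper. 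Flatness of $\pi$ is automatic, since $\cP$ is reduced and irreducible (as the closure of an irreducible set) and dominates the smooth curve $\bA^1$. The fiber over $t\in \bG_m$ is identified with $P_{\un{n},a_t}^{\un{d}}=\overline{\Pic_{\un{n},a_t}^{\un{d},\st}}$, while the fiber over $0$ equals $\cP_0=h^{-1}(0)\cap\overline{\bG_m\cdot\Pic_{\un{n},a}^{\un{d},\st}}$.

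Specialization of cycle classes in a flat proper family over $\bA^1$ then gives $[\pi^{-1}(t)]=[\pi^{-1}(0)]$ in $H^*_c(M_n^d)$ for every $t\in\bA^1$; evaluated at a point of $\bG_m$ this is the first asserted equality. For the decomposition, $\cP_0$ is equidimensional of dimension $\dim h^{-1}(0)$, so its cycle class is a non-negative integer combination of the fundamental classes of its irreducible components. By Corollary~\ref{ClosureComponents} these components belong to $\{\overline{F}_{\un{m}}^{\un{e},-}\}$, are subject to $\wt(\un{m},\un{e})\ge\wt(\un{n},\un{d})$ with equality only at $(\un{m},\un{e})=(\un{n},\un{d})$, and $\overline{F}_{\un{n}}^{\un{d},-}$ genuinely appears. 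Hence
$$[P_{\un{n},a}^{\un{d}}] \;=\; a_{\un{n},\un{d}}\,[\overline{F}_{\un{n}}^{\un{d},-}] \;+\; \sum_{\wt(\un{m},\un{e})>\wt(\un{n},\un{d})} a_{\un{m},\un{e}}\,[\overline{F}_{\un{m}}^{\un{e},-}]$$
with $a_{\un{m},\un{e}}\in\bN_0$ and $a_{\un{n},\un{d}}\ge 1$.

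For the $\PGL_n$ variant, the same construction performed inside $\bA^1\times M_{\PGL_n}^d$ with the cycles $\bP\Pic_{\un{n},a}^{\un{d},\st}$, $\bP P_{\un{n},a}^{\un{d}}$, and $\overline{\bP F}_{\un{m}}^{\un{e},-}$ (the images of their $\GL_n$-counterparts under the quotient $M_n^d\to M_{\PGL_n}^d$) yields a flat proper family $\bP\cP\to\bA^1$ to which the specialization-and-decomposition argument applies verbatim, using the $\PGL_n$-analogue of Corollary~\ref{ClosureComponents} obtained by quotienting by the $\Pic_C$-action. The principal technical point is the properness of $\pi$, which is handled as above; flatness is automatic over the smooth curve $\bA^1$, and the crucial positivity $a_{\un{n},\un{d}}\ge 1$ is exactly the content of Corollary~\ref{ClosureComponents}.
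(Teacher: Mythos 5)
Your proposal is correct and follows essentially the same route as the paper: take the closure of the $\bG_m$-deformation family over $\bA^1$, observe it is proper (via properness of $h$) and flat over the curve $\bA^1$, apply specialization of cycle classes to equate the class of the general fiber $P_{\un{n},a}^{\un{d}}$ with that of the special fiber in $h^{-1}(0)$, and then decompose the special fiber using Corollary \ref{ClosureComponents}. The only difference is cosmetic: the paper cites the precise compatibility-with-base-change statement (SGA 4\,$\frac{1}{2}$, Cycle, Th\'eor\`eme 2.3.8, with the derived pullback of $\cO_{P_A}$) where you invoke specialization generically, and you spell out the properness and flatness checks slightly more explicitly.
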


\begin{proof}
This follows from \cite[Cycle, Th\'eor\`eme 2.3.8]{Cycle} as follows: The cycle $$P_A:=\overline{\bG_m \times P_{a_t,\un{n}}^{\un{d}}}  \subset  M_n^d \times \bA^1$$ is flat over $\bA^1$ and fiberwise of dimension $d=\dim h^{-1}(0)$. This cycle therefore defines a class $$cl(P_A,\cO_{P_A})\in H^0(\bA^1, \bR^d p_{\bA_1,!} \bQ_\ell) \cong H^d_c(M_n^d),$$ that commutes with every base change, if one uses the derived pull back for $\cO_{P_A}$. Since $P_A\to \bA^1$ is flat this implies that the classes of all fibers relative to the structure sheaf $cl(p^{-1}(a),\cO_{p^{-1}(a)})$ coincide. For $a=1$ we obtain the class $[P_{\un{n},a}^{\un{d}}]$ and for $a=0$ we obtain the class of $[P_A \cap h^ {-1}(0)]$. We have seen in Corollary \ref{ClosureComponents} that this intersection is a union of irreducible components $\overline{F_{\un{m}}^{\un{e},-}}$ with $\wt(\un{m},\un{e}) < \wt(\un{n},\un{d})$ and the component $\overline{F_{\un{n}}^{\un{d},-}}$. 

The same reasoning holds for the cycles $\bP P_{\un{n},a}^{\un{d}}$ and $\overline{\bP F}_{\un{n}}^{\un{d},-} \in M_{\PGL_n}^d$.
\end{proof}

From this Lemma we can now deduce the vanishing of the intersection form on $H^*_c(M_{\PGL_n}^d)$:
\begin{proof}[Proof of Theorem \ref{main}]
Since we assumed that the varieties $\overline{\bP F}_{\un{n}}^{\un{d},-}$ are irreducible, these varieties are the irreducible components of $h^{-1}_{\PGL_n}(0)$. In particular, the classes $[\overline{\bP F}_{\un{n}}^{\un{d},-}] \in H^*_c(M_{\PGL_n}^d)$ generate the middle cohomology group. Thus Lemma \ref{cycles} shows that the classes $[P_{\un{n},a}^{\un{d}}]$ together with the class $[\bP F_{n}^{d}] = [\Bun_{PGL_n}^{d,\st}]$ also generate $H^{\dim M_{\PGL_n}^d}_c(M_{\PGL_n}^d)$. 

Lemma \ref{cycles} implies that for any $(\un{n},\un{d}),( \un{m},\un{e})$ with $\un{n}\neq(n), \un{m}\neq (n)$ we have 
$$[\bP P_{a,\un{n}}^{\un{d}}] \cup [\bP P_{a,\un{m}}^{\un{e}}] =0 \in H_c^*(M_{\PGL_n}^d),$$ because the classes are independent of the choice of $a$ and for different $a$ the cycles are contained in different fibers of the Hitchin map, so these cycles do not intersect geometrically and thus (\cite[Cycle, Remarque 2.3.9]{Cycle}) the cup-product of their classes is $0$. Similarly $$[{\bP P}_{a,\un{n}}^{\un{d}}]\cup [\bP F_{n}^{d}]= 0$$ because $\bP F_{n}^{d} = \Bun_{PGL_n}^{d,\st} \subset h_{\PGL_n}^{-1}(0)$. 

Finally by the Poincar\' e--Hopf theorem 
$$[\bP F_{n}^d] \cap [\bP F_{n}^d]= [\Bun_{\PGL_n}^{d,\st}]\cap [\Bun_{\PGL_n}^{d,\st}] = \chi ( \Bun_{\PGL_n}^{d,\st} )$$
and $\chi ( \Bun_{\PGL_n}^{d,\st} ) = 0$, e.g. by the explicit formula for the cohomology found by Harder--Narasimhan and Atiyah--Bott \cite[Theorem 10.10]{AB}. Therefore the intersection form vanishes on $H^{\dim M_{\PGL_n}^d}_c(M_{\PGL_n}^d)$ and since $H^{*}_c(M_{\PGL_n}^d)=0$ for $*<\dim M_{\PGL_n}^d$ this shows that the intersection form vanishes in all degrees.
\end{proof}

\section{Irreducibility of the  spaces of stable chains} 

In this section we want to prove Theorem \ref{irreducible}, i.e., we show that the coarse moduli spaces $\Chain_{\un{n}}^{\un{d},\alpha-\sst}$ of $\alpha-$semistable chains are irreducible if $\alpha$ satisfies $\alpha_{i+1}-\alpha_i>2g-2$ for all $i$  and $\alpha$ is not a critical value. Let us begin by recalling these notions from \cite{AGPS}.

\subsection{Basic results on stability of chains}
 For $\alpha\in \bR^{r+1}$ the $\alpha$-slope of a chain $\cE_\bullet$ is defined as $$\mu_{\alpha}(\cE_\bullet) := \frac{\deg(\oplus \cE_i)}{\rk(\oplus {\cE_i})} + \frac{\sum_{i=0}^r \alpha_i \rk(\cE_i)}{\rk(\oplus \cE_i)}.$$
Since the $\alpha$-slope only depends on the rank $\un{n}:=(\rk(\cE_i))$ and the degree $\un{d}:=(\deg(\cE_i))$ we often write
$$ \mu_{\alpha}(\un{n},\un{d}) := \frac{ \sum_{i=0}^r d_i + \alpha_i n_i}{\sum_{i=0}^r n_i}.$$
A chain $\cE_\bullet$ is called (semi-)stable if for all proper subchains $0 \subsetneq \cF_\bullet \subsetneq \cE_\bullet$ we have $\mu_{\alpha}(\cF_\bullet) \leq \mu_{\alpha}(\cE_\bullet)$.

A stability parameter $\alpha\in \bR^{r+1}$ is called {\em critical} for some rank and degree $(\un{n},\un{d})$ if there exist $\un{n}^\prime,\un{d}^\prime$ with $\un{n}^\prime < \un{n}$ (i.e. $n_i^\prime \leq n_i$ and $\sum n_i^\prime < \sum n_i)$ and $\gamma \in \bR^{r+1}$ such that $\mu_{\alpha}(\un{n}^\prime ,\un{d}^\prime) = \mu_{\alpha}(\un{n},\un{d})$ and $\mu_{\gamma}(\un{n}^\prime ,\un{d}^\prime) \neq \mu_{\gamma}(\un{n},\un{d})$.

For given $\un{n}^\prime,\un{d}^\prime$ the condition that $\alpha\in \bR^{r+1}$ satisfies $\mu_{\alpha}(\un{n}^\prime ,\un{d}^\prime) = \mu_{\alpha}(\un{n},\un{d})$ is a linear equation in $\alpha$.  If this equation is non-trivial, then the corresponding a hyperplane in $\bR^{r+1}$ is called a wall. The union of the walls is known to be closed \cite[Section 2.4]{AGPS}.

The Higgs bundle defined by a chain is the bundle $\cE^\prime := \oplus \cE_i \tensor \Omega_C^{-r+i}$ together with the Higgs field $\theta := \oplus \phi_i$. It is semistable if and only if the chain is semistable with respect to the parameter $\alpha_{\Higgs}=(\alpha_i)_{i}=(i(2g-2))_{i}$.   For any $\alpha\in \bR^{r+1}$ we will abbreviate the  condition $\alpha_{i+1}-\alpha_i>2g-2$ for all $i$ as $$\alpha>\alpha_{\Higgs}.$$

\begin{remark}\label{stab}
If $(n,d)=1$ then for all $\un{n},\un{d}$ occurring as rank and degree of fixed point components in the Hitchin moduli space $M_n^d$, the stability parameter ${\alpha}_{\Higgs}$ is not a critical value and therefore $\Chain_{\un{n}}^{\un{d},\alpha_{\Higgs}-\sst}$ coincides with a moduli space of $\alpha$-stable chains for some $\alpha$ satisfying $\alpha_{i+1}-\alpha_i>2g-2$ for all $i$. Thus Theorem \ref{irreducible} implies that Assumption \ref{assum1} from Section 1 holds. (See \cite[Section 7]{GPHS} for a similar discussion.)
\end{remark}

If a chain $\cE_\bullet$ is not $\alpha$-semistable, then it has a canonical Harder-Narasimhan filtration $\cF_\bullet^1 \subsetneq \dots \subsetneq \cF_\bullet^h=\cE_\bullet$ such that $\cF_\bullet^i/\cF_\bullet^{i-1}$ are $\alpha$-semistable and $\mu_\alpha (\cF_\bullet^1) > \mu_{\alpha}(\cF_\bullet^2) > \dots > \mu_{\alpha}(\cF_\bullet^h)$. For an unstable chain $\cE_\bullet$ we will denote by $\cE_\bullet^i := \cF_\bullet^i/\cF_\bullet^{i-1}$ the graded quotients of its Harder-Narasimhan filtration.

The collection $t=(\rank(\cF_\bullet^i/\cF_\bullet^{i-1}), \deg(\cF_\bullet^i/\cF_\bullet^{i-1}))$ is called the type of the Harder--Narasimhan filtration. The locally closed substacks of unstable chains of a given type $t$ are called Harder-Narasimhan strata and we denote them by $\Chain_{\un{n}}^{\un{d},\alpha,t}$.

\subsection{The wall-crossing argument}
To prove the theorem we use the same method as in as in \cite{BGPG}, varying $\alpha$ and estimating the dimension of the part of the moduli space that changes when $\alpha$ crosses a wall. 

Let us begin by recalling the wall-crossing argument in the language of algebraic stacks in this situation (\cite[Section 3]{GPH} and the references therein): 
Given a chain $\cE_\bullet=(\cE_i,\phi_i)$ deformations of $\cE_\bullet$ are parametrized by the cohomology of the complex
$$ [\oplus \cHom(\cE_i,\cE_i) \map{b=\oplus [\quad,\phi_i]} \oplus \cHom(\cE_i,\cE_{i-1})].$$
If  $\alpha>\alpha_{\Higgs}$ we know that the $\bH^2$ of the complex vanishes, so that deformation theory for $\alpha$-semistable chains is unobstructed (see \cite[Lemma 4.6]{GPHS}\footnote{Unfortunately this Lemma is only formulated for stable chains, because it allows the case $\alpha=\alpha_{\Higgs}$. However for $\alpha>\alpha_{\Higgs}$ the first inequality in the proof is always strict, so that the vanishing of $H^2$ also holds for strictly semistable chains if $\alpha>\alpha_{\Higgs}$.} ). Therefore we have: 
$$\dim \Chain_{\un{n}}^{\un{d},\alpha-\sst} = - \chi( \bH^*( C, [\oplus \cHom(\cE_i,\cE_i) \map{b=\oplus [\quad,\phi_i]} \oplus \cHom(\cE_i,\cE_{i-1})])).$$
and this stack is smooth.

Given chains $\cE_\bullet^\prime, \cE_\bullet^\pprime$ we will abbreviate:
$$ \chi(\cE_\bullet^\prime, \cE_\bullet^\pprime):= \chi(  \bH^*( C, [\oplus \cHom(\cE_i^\prime,\cE_i^\pprime) \map{b=\oplus [\quad,\phi_i]} \oplus \cHom(\cE_i^\prime,\cE_{i-1}^\pprime)] )).$$

If $\alpha_t := \alpha + t\delta$ is a family of parameters such that  $\alpha_{0}$ is a critical value and such that $\alpha_t > \alpha_{Higgs}$ for all $t$ in some neighborhood of $0\in \bR$, then we know from \cite[Proposition 2] {GPH} that for $t\in (0,\epsilon)$ with $\epsilon$ sufficiently small and $\alpha^+:=\alpha_t, \alpha^-:=\alpha_{-t}$ we have:  
$$ \Chain_{\un{n}}^{\un{d}, \alpha_0-\sst} = \Chain_{\un{n}}^{\un{d},\alpha^+-\sst} \cup \bigcup_{t\in I^+}  \Chain_{\un{n}}^{\un{d},\alpha^+,t},$$
$$ \Chain_{\un{n}}^{\un{d}, \alpha_0-\sst} = \Chain_{\un{n}}^{\un{d},\alpha^--\sst} \cup \bigcup_{t\in I^-} \Chain_{\un{n}}^{\un{d},\alpha^-,t}.$$
Here $I^+$ (resp. $I^-)$ is the finite set of types $(\un{n}^i,\un{d}^i)$ of $\alpha^+$-Harder-Narasimhan strata such that $\mu_{\alpha_0}(\un{n}^i,\un{d}^i) = \mu_{\alpha_0}(\un{n},\un{d})$ for all $i$.
Moreover a type $t=((\un{n}^0,\un{d}^0),\dots, (\un{n}^k,\un{d}^k)) $ of HN-strata occurs in $I^+$  if and only if the opposite type $t^{\opp}:=((\un{n}^k,\un{d}^k),\dots, (\un{n}^0,\un{d}^0))$ occurs in $I^-$.

\begin{lemma}
Let $\alpha_t \in \bR^{r+1} $ be a family of stability parameters as above, such that $\Chain_{\un{n}}^{\un{d},\alpha^+-\sst}$ is irreducible and $\Chain_{\un{n}}^{\un{d},\alpha^--\sst}$ is not irreducible. Then there exists a type $t=(\un{n}^i,\un{d}^i) \in I^+$ such that for all $\cE_\bullet \in  \Chain_{\un{n}}^{\un{d},\alpha^+,t}$ we have $$\chi(\cE_\bullet^l, \cE_\bullet^{j}) =0 \text{ for all } l<j,$$ where the $\cE_\bullet^i$ are the graded quotients of the Harder-Narasimhan filtration of $\cE_\bullet$.    
\end{lemma}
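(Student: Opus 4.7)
The plan is to reduce the conclusion to a dimension count for the flip loci $\Chain_{\un{n}}^{\un{d},\alpha^+,t}$ and then invoke the standard Harder--Narasimhan codimension formula. From the two wall-crossing decompositions of $\Chain_{\un{n}}^{\un{d},\alpha_0-\sst}$ recalled above, together with the observation that a chain in one family of flip loci is semistable with respect to the opposite parameter (so the $\alpha^+$- and $\alpha^-$-flip loci are mutually disjoint), one obtains
\[
\Chain_{\un{n}}^{\un{d},\alpha^--\sst}=\Bigl[\Chain_{\un{n}}^{\un{d},\alpha^+-\sst}\setminus\bigcup_{s\in I^-}\Chain_{\un{n}}^{\un{d},\alpha^-,s}\Bigr]\,\sqcup\,\bigsqcup_{t\in I^+}\Chain_{\un{n}}^{\un{d},\alpha^+,t}.
\]
The bracketed term is an open substack of the irreducible stack $\Chain_{\un{n}}^{\un{d},\alpha^+-\sst}$ and therefore contributes at most one irreducible component to $\Chain_{\un{n}}^{\un{d},\alpha^--\sst}$.

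Since $\alpha^->\alpha_{\Higgs}$, the stack $\Chain_{\un{n}}^{\un{d},\alpha^--\sst}$ is smooth of pure dimension $D:=-\chi(\cE_\bullet,\cE_\bullet)$, so every irreducible component has dimension $D$. By the reducibility assumption some further irreducible component must exist, and by the disjoint decomposition above it must contain a dense open in one of the Harder--Narasimhan strata $\Chain_{\un{n}}^{\un{d},\alpha^+,t}$ with $t\in I^+$. Consequently $\dim\Chain_{\un{n}}^{\un{d},\alpha^+,t}\geq D$; and since $\Chain_{\un{n}}^{\un{d},\alpha^+,t}$ is a locally closed substack of the smooth $D$-dimensional stack $\Chain_{\un{n}}^{\un{d},\alpha_0-\sst}$, equality holds for this $t$.

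To finish, I translate $\dim\Chain_{\un{n}}^{\un{d},\alpha^+,t}=D$ into the claimed vanishing of Euler characteristics. Realizing $\Chain_{\un{n}}^{\un{d},\alpha^+,t}$ as the stack of iterated extensions of $\alpha^+$-semistable chains $\cE_\bullet^1,\dots,\cE_\bullet^k$ with HN-ordering $\mu_{\alpha^+}(\cE_\bullet^l)>\mu_{\alpha^+}(\cE_\bullet^j)$ for $l<j$, and using the vanishing of $\bH^{\geq 2}$ of the deformation complex for $\alpha>\alpha_{\Higgs}$, the standard HN-codimension computation gives
\[
\codim_{\Chain_{\un{n}}^{\un{d},\alpha_0-\sst}}\Chain_{\un{n}}^{\un{d},\alpha^+,t}=-\sum_{l<j}\chi(\cE_\bullet^l,\cE_\bullet^j).
\]
For $l<j$ the semistability of $\cE_\bullet^l$ and $\cE_\bullet^j$ forces $\Hom(\cE_\bullet^l,\cE_\bullet^j)=0$, and $\bH^2$-vanishing kills $\Ext^2(\cE_\bullet^l,\cE_\bullet^j)$, so every term $\chi(\cE_\bullet^l,\cE_\bullet^j)=-\dim\Ext^1(\cE_\bullet^l,\cE_\bullet^j)$ is non-positive. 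Hence the total codimension vanishes precisely when each individual $\chi(\cE_\bullet^l,\cE_\bullet^j)=0$, yielding the lemma.

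The main obstacle I expect is the careful bookkeeping of the HN-codimension formula with the correct sign conventions: one must compute $\dim \Chain^{\alpha^+,t}$ by stratifying the stack of HN-filtered chains over the product of moduli of graded pieces, keeping precise track of which $\Hom$- and $\Ext^1$-terms enter (as tangent directions versus as infinitesimal automorphisms of the filtration). All other inputs--smoothness of $\Chain^{\alpha-\sst}$, the $\bH^{\geq 2}$-vanishing, and the disjoint wall-crossing decomposition--are already in place from the preceding discussion.
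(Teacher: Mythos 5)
Your proposal follows essentially the same route as the paper: compare the dimension formula for an $\alpha^+$-Harder--Narasimhan stratum (the iterated-extension computation of \cite[Proposition 4.8]{GPHS}) with $\dim \Chain_{\un{n}}^{\un{d},\alpha_0-\sst}=-\chi(\cE_\bullet,\cE_\bullet)$, use $\Hom(\cE^l_\bullet,\cE^j_\bullet)=0$ for $l\neq j$ together with the $\bH^2$-vanishing for $\alpha>\alpha_{\Higgs}$ to see that each $\chi(\cE^l_\bullet,\cE^j_\bullet)\leq 0$, and conclude that a stratum of full dimension forces $\chi(\cE^l_\bullet,\cE^j_\bullet)=0$ for all $l<j$. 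Your explicit "extra component must come from an $I^+$-stratum" step is precisely the (implicit) way the paper's two wall-crossing decompositions are meant to be used, so the overall structure is the same.

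One auxiliary claim is false, though harmlessly so: the $\alpha^+$- and $\alpha^-$-flip loci are \emph{not} disjoint in general, so your displayed decomposition of $\Chain_{\un{n}}^{\un{d},\alpha^--\sst}$ is not an equality. A chain in $\Chain_{\un{n}}^{\un{d},\alpha^+,t}$ with $t\in I^+$ need not be $\alpha^-$-semistable: if $A_\bullet,B_\bullet$ are stable chains with equal $\alpha_0$-slope but different $\delta$-slope, then $A_\bullet\oplus B_\bullet$ is $\alpha_0$-semistable and unstable on \emph{both} sides of the wall, hence lies in an $\alpha^+$-stratum with type in $I^+$ and simultaneously in an $\alpha^-$-stratum with type in $I^-$. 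The correct statement is
$\Chain_{\un{n}}^{\un{d},\alpha^--\sst}=\bigl[\Chain_{\un{n}}^{\un{d},\alpha^+-\sst}\cap\Chain_{\un{n}}^{\un{d},\alpha^--\sst}\bigr]\sqcup\bigsqcup_{t\in I^+}\bigl[\Chain_{\un{n}}^{\un{d},\alpha^+,t}\cap\Chain_{\un{n}}^{\un{d},\alpha^--\sst}\bigr]$,
where the pieces in the second family are open substacks of the strata $\Chain_{\un{n}}^{\un{d},\alpha^+,t}$. Since the only consequence you draw is the inequality $\dim \Chain_{\un{n}}^{\un{d},\alpha^+,t}\geq D$ for the stratum containing the generic point of the extra component, the argument goes through verbatim after this correction; but the unjustified disjointness assertion should be removed or replaced by the intersection with the $\alpha^-$-semistable locus.
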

\begin{proof}
The dimension of a HN--stratum of type $t=(\un{n}^\bullet,\un{d}^\bullet)$ can be computed as follows: Let $\cE_\bullet$ be a chain of this type, $\cE^{i}_\bullet:=\cF_\bullet^i/\cF_\bullet^{i-1}$ be the graded quotients of the Harder-Narasimahn filtration of the chain $\cE_\bullet$.

Then the dimension of the stratum is (\cite[Proposition 4.8]{GPHS}):
\begin{align*} 
\dim \Chain_{\un{n}}^{\un{d},\alpha^+,t}&= - \sum_{j} \chi( \cE^j_\bullet,\cE^j_\bullet) - \sum_{l>j} \chi( \cE^l_\bullet,\cE^j_\bullet)\\
\end{align*}

and the dimension of the opposite stratum is given by the same expression, where the sum is replaced by a sum over $l<j$:
\begin{align*} 
\dim \Chain_{\un{n}}^{\un{d},\alpha^-,t^{\opp}}&= - \sum_j \chi( \cE^j_\bullet,\cE^j_\bullet) - \sum_{l<j} \chi( \cE^l_\bullet,\cE^j\bullet).\\
\end{align*}

Finally the dimension of $\Chain_{\un{n}}^{\un{d},\alpha-\sst}$ is given by 
\begin{align*} 
\dim \Chain_{\un{n}}^{\un{d},\alpha-\sst}&=- \sum_{l,j} \chi(\cE^l_\bullet,\cE^j_\bullet)\\
\end{align*}
We know moreover that $\Hom(\cE^l_\bullet, \cE_\bullet^j)=0$ for all $l\neq j$ in our situation since the two chains are semistable chains and either $\mu(\alpha^+)(\cE_\bullet^l) > \mu(\alpha^+)(\cE_\bullet^j)$ or $\mu(\alpha^-)(\cE_\bullet^l) > \mu(\alpha^-)(\cE_\bullet^j)$. Therefore the groups $\bH^0$ and $\bH^2$ of the complex $$ [\oplus \cHom(\cE^l_i,\cE^j_i) \map{b=\oplus [\quad,\phi_i]} \oplus \cHom(\cE^l_i,\cE^j _{i-1})]$$
vanish for $l\neq j$. Thus for all $l\neq j$:
$$\chi(\cE^l_\bullet,\cE^j_\bullet) \leq 0.$$
Therefore an $\alpha^+$-HN-stratum can only have dimension equal to the dimension of the stack of all chains if
$$ \chi(\cE^l_\bullet,\cE^j_\bullet) =0$$
for all $l<j$.
\end{proof}

For $\cE_\bullet$ as in the above Lemma we know by \cite[Proposition 4.5]{AGPS} that for all $l,j$ $$ \chi( \bH^*( [\oplus \cHom(\cE^l_i,\cE^j_i) \map{b=\oplus [\quad,\phi_i]} \oplus \cHom(\cE^l_i,\cE^j _{i-1})])) <0$$
if $b$ is not generically an isomorphism. Note that in  \cite[Proposition 4.5]{AGPS} the chains $\cE_\bullet^l,\cE_\bullet^i$ were assumed to be polystable, however, for strictly semistable chains the above complex admits a filtration, such that the subquotients are given by the analogous complex for the stable subquotients of $\cE^l_\bullet,\cE_\bullet^j$. Since $\chi$ is additive with respect to filtrations we see that we must have $ \chi(\cE_\bullet^l,\cE_\bullet^j)<0$ unless all graded pieces of $b$ are generically isomorphisms.

If $b$ is generically an isomorphism, but not an isomorphism, then the complex $$[\oplus \cHom(\cE^l_i,\cE^j_i) \map{b=\oplus [\quad,\phi_i]} \oplus \cHom(\cE^l_i,\cE^j _{i-1})]$$ 
is isomorphic to its cokernel which then must be a torsion sheaf on $C$, so also in this case its Euler characteristic is $<0$. Thus we know:

\begin{corollary}[{\cite[Proposition 4.5]{AGPS}}]
The Euler charactersitic 
$$\chi( H^*( [\oplus \cHom(\cE^l_i,\cE^j_i) \map{b=\oplus [\quad,\phi_i]} \oplus \cHom(\cE^l_i,\cE^j _{i-1})]))$$
can only vanish if the map $b$ is an isomorphism.
\end{corollary}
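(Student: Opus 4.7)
The plan is to split into two cases according to whether the map $b$ fails to be an isomorphism only at finitely many points or already at the generic point of $C$. The key input is \cite[Proposition 4.5]{AGPS}, which asserts strict negativity of the Euler characteristic in the polystable setting when $b$ is not generically an isomorphism. What remains is to upgrade that result from polystable to merely semistable graded pieces and to handle the case where $b$ is a generic but not global isomorphism.

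First, suppose $b$ is not generically an isomorphism. To reduce to the polystable case covered by \cite[Proposition 4.5]{AGPS}, I would refine $\cE^l_\bullet$ and $\cE^j_\bullet$ by Jordan--Hölder filtrations in the category of $\alpha^+$-semistable chains. These filtrations induce a filtration on the two-term complex whose associated graded is the direct sum of the analogous complexes for pairs of stable JH-factors, and $\chi$ is additive on such filtrations. Since $b$ fails to be a generic isomorphism on the total sheaves, at least one of the graded pieces inherits this failure, because a filtered map between locally free sheaves of the same generic rank is a generic isomorphism iff every graded piece is. Applying \cite[Proposition 4.5]{AGPS} on that piece gives strict negativity, while the other pieces contribute $\chi \leq 0$ by the preceding lemma (the $\bH^0$ and $\bH^2$ both vanish for $\alpha^+$-semistable chains of distinct slopes), so the total Euler characteristic is strictly negative.

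Second, assume $b$ is generically an isomorphism but not an isomorphism globally. Then $b$ is an injection of locally free sheaves of equal rank with cokernel a nonzero torsion sheaf $T$ supported on the finite degeneration locus, so the two-term complex is quasi-isomorphic to $T$ placed in cohomological degree $1$. Its hypercohomology Euler characteristic is therefore $-\chi(C,T)=-\length(T)<0$. Combining the two cases shows that $\chi$ can vanish only when $b$ is a global isomorphism. The main subtlety is verifying that the reduction in the first case genuinely preserves the hypothesis of \cite[Proposition 4.5]{AGPS}, i.e.\ that the graded operator $\gr(b)$ is still of the form $\oplus[\,\cdot\,,\phi_i^{\gr}]$ coming from chain morphisms of stable factors; this comes down to checking that the $\phi_i$ are compatible with any JH filtration in the category of chains, which is formal once the JH filtration is set up inside $\Chain_{\un{n}}^{\un{d},\alpha^+\text{-}\sst}$.
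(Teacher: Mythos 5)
Your proposal is correct and is essentially the paper's own argument: for $b$ not generically an isomorphism you reduce to the polystable case of \cite[Proposition 4.5]{AGPS} via Jordan--H\"older filtrations of the semistable chains, using that the filtration induces one on the two-term complex and that $\chi$ is additive, while for $b$ a generic but non-global isomorphism the complex is quasi-isomorphic to its torsion cokernel in degree $1$, giving $\chi=-\length(\coker b)<0$. One tiny imprecision: only the implication ``all graded pieces of $b$ generic isomorphisms $\Rightarrow b$ a generic isomorphism'' is needed (and true); the converse direction of the ``iff'' you state can fail for filtered maps, but this does not affect the proof.
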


In view of this corollary we will say that a type $t=(\un{n}^\bullet,\un{d}^\bullet)$ of Harder-Narasimhan is {\em maximal} if for all chains $\cE_\bullet\in \Chain_{\un{n}}^{\un{d},\alpha,t}$ the morphisms 
$$[\oplus \cHom(\cE^l_i,\cE^j_i) \map{b=\oplus [\quad,\phi_i]} \oplus \cHom(\cE^l_i,\cE^j _{i-1})]$$
are isomorphisms for all $l>i$. 

Our next aim will be to determine the possible types of maximal Harder-Narasimhan strata.

\subsection{Maximal Harder--Narasimhan strata}
Let us call a pair of chains $\cE_\bullet^\prime,\cE_\bullet^\pprime$ a {\em maximal pair} if the map
$$ b\colon\oplus \cHom(\cE^\pprime_i,\cE^\prime_i) \map{\oplus [\quad,\phi_i]} \oplus \cHom(\cE^\pprime_i,\cE^\prime _{i-1})$$
is an isomorphism.

To state our characterization of maximal pairs let us introduce a notation. Given a chain $\cE_\bullet=(\cE_j,\phi_j)$ and an index $i$ we will denote by
$\cE_{\widehat{i}}$ the chain obtained by removing the $i$-th bundle and composing $\phi_{i+1}$ and $\phi_i$, i.e.,
$$\cE_{\widehat{i}}= ( \cE_r \to \dots \to \cE_{i+1} \map{\phi_i \circ\phi_{i+1}}  \cE_{i-1} \to \dots \to \cE_0).$$

\begin{lemma}\label{maximalPairs}
Let $\cE_\bullet^\pprime$, $\cE_\bullet^\prime$  be a maximal pair of chains of ranks $\un{n}^\pprime$ and $\un{n}^\prime$.
Then the following hold: 
\begin{enumerate}
\item There exists  $0\leq i\leq r$ such that either $\cE_i^\pprime=0$ or $\cE_i^\prime=0$.
\item If for some $i$ the morphism $\phi_i^\prime$ is an isomorphism, then the pair of chains $\cE_{\widehat{i}}^\pprime$, $\cE_{\widehat{i}}^\prime$  is also a maximal pair.
\item If for some $i$ the morphism $\phi_i^\pprime$ is an isomorphism, then the pair of chains $\cE_{\widehat{i-1}}^\pprime$, $\cE_{\widehat{i-1}}^\prime$ is also a maximal pair.	
\item If the length $r$ of the chain is at least $2$, then there exists an $i$ such that $\phi_i^\prime$ or $\phi_i^\pprime$ is an isomorphism. 
\item If $r=1$ then one of the morphisms $\phi_1^\prime,\phi_1^\pprime$ is an isomorphism, or $\cE_1^\pprime=0=\cE_0^\prime$.
\end{enumerate}
\end{lemma}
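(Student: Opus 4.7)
The plan is to prove the five items in the order (2)--(3), (5), then (4) and (1) together by induction on $r$ using (5) as base.

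For (2)--(3) I would view $b$ as a ``bidiagonal'' block matrix with columns the $L_j=\cHom(\cE_j^\pprime,\cE_j^\prime)$ and rows the $R_i=\cHom(\cE_i^\pprime,\cE_{i-1}^\prime)$; row $R_i$ receives non-zero blocks only from $L_{i-1}$ (via $-\cdot\phi_i^\pprime$) and $L_i$ (via $\phi_i^\prime\cdot$). Under the hypothesis of (2) the block $L_i\to R_i$ is a sheaf isomorphism, which I use as a pivot for block Gaussian elimination, clearing the other entries in column $L_i$ and row $R_i$ and splitting off $(L_i,R_i)$ as an acyclic summand. After identifying the new target in $R_{i+1}$ with $\cHom(\cE_{i+1}^\pprime,\cE_{i-1}^\prime)$ via post-composition with $\phi_i^\prime$, the residual complex is exactly $b_{\widehat i}$ for $(\cE_{\widehat i}^\pprime,\cE_{\widehat i}^\prime)$, so $b$ iso gives $b_{\widehat i}$ iso. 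Part (3) is identical with the pivot $L_{i-1}\to R_i$.

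For (5) I case-analyse which of $\cE_0^\pprime,\cE_1^\pprime,\cE_0^\prime,\cE_1^\prime$ vanish. The degenerate cases reduce to the standard fact that pre- or post-composition $\phi^*\colon\cHom(A,C)\to\cHom(B,C)$ is an isomorphism of sheaves iff $\phi$ is, provided $C\neq 0$; this either produces an iso $\phi_i$ or lands in the case $\cE_1^\pprime=\cE_0^\prime=0$. When all four bundles are nonzero, I analyse $b$ at a generic point of $C$: the $L_0$-summand injectivity in $\ker b=0$ forces $\phi_1^\pprime$ generically surjective and the $L_1$ analogue forces $\phi_1^\prime$ generically injective. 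Computing $\dim\im(L_0\oplus L_1\to R_1)=n_0^\pprime n_0^\prime+n_1^\pprime n_1^\prime-n_0^\pprime n_1^\prime$ and equating to $\dim R_1=n_1^\pprime n_0^\prime$ yields $(n_0^\pprime-n_1^\pprime)(n_0^\prime-n_1^\prime)=0$; an equal-rank bundle map that is generically iso must be globally iso (any rank-drop point would contradict stalk-wise iso-ness of $b$), so $\phi_1^\pprime$ or $\phi_1^\prime$ is iso.

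For the combined induction proving (4) and (1), suppose first for (4) that no $\phi_i$ is iso in a length-$r$ maximal pair. If a boundary bundle $\cE_0^\prime$ or $\cE_r^\pprime$ (or the analogous $\pprime$/$\prime$ counterparts) vanishes, the complex collapses to the $b$-complex of a maximal pair of length $r-1$ and the inductive hypothesis of (4), or (5) when $r-1=1$, yields an iso contradicting the hypothesis. Otherwise $\cE_0^\prime\neq 0$ forces $\phi_1^\pprime$ generically surjective and $\cE_r^\pprime\neq 0$ forces $\phi_r^\prime$ generically injective; combining the row condition at each $R_i$ (``$\phi_i^\pprime$ gen inj or $\phi_i^\prime$ gen surj'') with the kernel condition on each interior $L_i$-summand (``$\phi_{i+1}^\pprime$ gen surj or $\phi_i^\prime$ gen inj''), and using ``no $\phi_i$ iso'' to exclude the combinations that would make some $\phi_i$ globally iso, propagates generic surjectivity of $\phi_i^\pprime$ from $i=1$ up to $i=r$; at $i=r$ this combines with the generic injectivity of $\phi_r^\prime$ to force $\phi_r^\prime$ globally iso, a contradiction. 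Part (1) then follows by induction on $r$: the base $r=0$ is immediate since the complex is $\cHom(\cE_0^\pprime,\cE_0^\prime)\to 0$, and the step uses (4) or (5) to find an iso $\phi_i$, reduces via (2) or (3), and invokes the inductive hypothesis to locate a vanishing bundle. The main obstacle is the propagation step in (4), in particular verifying that ``$\phi_i^\pprime$ generically iso plus equal rank'' forces $\phi_i^\pprime$ to be globally iso; this requires a careful stalk-wise analysis of $b$ on the rank-drop locus, where the row and kernel conditions must be re-examined to rule out compatibility with the ``no iso'' assumption.
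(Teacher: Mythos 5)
Parts (2), (3) and the induction giving (1) are fine, and are essentially the paper's own argument: the block you split off by Gaussian elimination is exactly the acyclic two-term subcomplex the paper uses. The genuine gap is in (4) (and in the non-degenerate case of your (5)): because you only analyse $b$ at a generic point, every dichotomy you extract ("$\phi_i^\pprime$ generically injective or $\phi_i^\prime$ generically surjective" from surjectivity onto $R_i$, "$\phi_i^\prime$ generically injective or $\phi_{i+1}^\pprime$ generically surjective" from injectivity on $L_i$) is only generic, so your propagation can at best produce a map that is \emph{generically} an isomorphism, whereas (4) asserts a global isomorphism. The bridge you invoke --- "an equal-rank bundle map that is generically iso must be globally iso, since a rank-drop point would contradict stalk-wise iso-ness of $b$" --- is precisely the unproven step, and you flag it yourself as the main obstacle. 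It is false for bundle maps in general ($\cO(-x)\hookrightarrow\cO$), so it must be extracted from $b$ being an isomorphism; but the fibrewise information you actually have at a degeneracy point $x$ of, say, $\phi_i^\pprime$ is only that surjectivity of $b\otimes k(x)$ onto $\cHom(\cE_i^\pprime,\cE_{i-1}^\prime)|_x$ forces $\phi_i^\prime$ to be fibre-surjective at $x$, while surjectivity of the dual map forces $\phi_{i-1}^\prime$ to be fibre-injective at $x$; neither yields a contradiction, so the "careful stalk-wise analysis" you defer is not a routine verification but the missing core of the argument.

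The paper avoids this issue by deriving the dichotomies globally: surjectivity of $b$ onto the summand $\cHom(\cE_i^\pprime,\cE_{i-1}^\prime)$ gives "$\phi_i^\prime$ surjective or $\phi_i^\pprime$ injective" as maps of sheaves, and the same applied to the dual isomorphism $b^\vee$ gives "$\phi_i^\pprime$ surjective or $\phi_{i-1}^\prime$ injective"; at the boundary indices these show that $\phi_1^\pprime$ is globally surjective when $\cE_0^\prime\neq 0$ and $\phi_r^\prime$ is globally injective when $\cE_r^\pprime\neq 0$, and then the very propagation you describe produces a map that is simultaneously injective and surjective, i.e.\ an honest isomorphism, with no generic-to-global upgrade. (In your $r=1$ case there is also a cleaner exit you missed: full generic injectivity of $b$, not just injectivity on each summand, forces $\im L_0\cap\im L_1=0$, i.e.\ $n_0^\pprime n_1^\prime=0$, so the "all four bundles nonzero" case is vacuous; but no such shortcut repairs the propagation in (4).) A smaller slip: if only $\cE_0^\pprime$ or $\cE_r^\prime$ vanishes, the complex does \emph{not} collapse to that of a shorter pair, since an orphan target summand ($\cHom(\cE_1^\pprime,\cE_0^\prime)$, resp.\ $\cHom(\cE_r^\pprime,\cE_{r-1}^\prime)$) survives; your case split only needs the $\cE_0^\prime=0$ and $\cE_r^\pprime=0$ cases, so this is harmless, but the parenthetical claim as stated is incorrect.
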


\begin{proof}
We will prove the Lemma by induction on the length of the chain. For $r=0$ there is nothing to prove.

It will be useful to extend our chains by $0$ for $\bullet=-1$ and $\bullet=r+1$ i.e., to denote $\cE_{-1}^\prime:=0,\cE_r^\prime:=0$, $\phi_0:=0,\phi_{r+1}=0$ and similarly for $\cE_\bullet^\pprime$.

If $$ b\colon\oplus \cHom(\cE^\pprime_i,\cE^\prime_i) \map{\oplus [\quad,\phi_i]} \oplus \cHom(\cE^\pprime_i,\cE^\prime _{i-1})$$
is an isomorphism, so is the dual map 
$$ b^\vee \colon\oplus \cHom(\cE^\prime_{i-1},\cE^\pprime_i) \map{\oplus [\quad,\phi_i]} \oplus \cHom(\cE^\prime_i,\cE^\pprime _{i}).$$
Thus, the pair of chains $\cE_\bullet^\pprime,\cE_{\bullet-1}^\prime$ will also be a maximal pair.

Next, let us collect some basic constraints on the maps $\phi_i^\prime,\phi_i^\pprime$.
If $b$ is an isomorphism, then for $i=0,\dots,r+1$ we know that the map
$$ \cHom(\cE_i^\pprime,\cE_i^\prime) \oplus \cHom(\cE^\pprime_{i-1},\cE_{i-1}^\prime) \map{\phi_i^\prime \circ \un{\quad} + \un{\quad}\circ \phi_{i}^\pprime} \cHom(\cE_i^\pprime,\cE_{i-1}^\prime)$$
must be surjective. 
The cokernel of the map $\phi_i^\prime \circ \un{\quad}$ is $ \cHom(\cE_i^\pprime,\coker(\phi_i^\prime))$, so the induced map
$$ \cHom(\cE^\pprime_{i-1},\cE_{i-1}^\prime) \to  \cHom(\cE_i^\pprime,\coker(\phi_i^\prime))$$
must be surjective. If $\coker(\phi_i^\prime) \neq 0$ the morphism $\phi_{i}^\pprime$ must be injective at the points in the support of the cokernel.
Thus $\phi_i^\prime$ is surjective or $\phi_{i}^\pprime$ must be injective.

The same argument applies to the pair $\cE_\bullet^\pprime,\cE_{\bullet-1}^\prime$, thus $\phi_{i}^\pprime$ is surjective or $\phi_{i-1}^\prime$ is injective.

In particular putting $i=1$ we find that $\cE_0^\prime=0$ or $\phi_1^\pprime$ is surjective and for $i=r+1$ we see that $\cE_r^\pprime=0$ or $\phi_r^\prime$ is injective.

This already allows to prove (4) and (5): 

If $\cE_0^\prime\neq 0$ the morphism $\phi_1^\pprime$ is surjective. By the first constraint above $\phi_1^\pprime$ is also injective, or $\phi_1^\prime$ is surjective as well. 
Inductively, we therefore either find that one of the maps $\phi_i^\prime, \phi_i^{\pprime}$ is an isomorphism, or we find that for all $i=1,\dots,r+1$ the maps are surjective, but then both chains are $0$.

If $\cE_0^\prime=0$, then the truncated pair of chains $\cE^\pprime_{\bullet>0},\cE^\prime_{\bullet>0}$ is again maximal. If $r>1$ we can then either conclude that $\cE_{1}^\prime=0$ so that $\phi_1^\prime$ is an isomorphism, or that by induction one of the morphisms $\phi_i^\prime, \phi_i^\pprime$ for $i>1$ is an isomorphism. This shows (4). 
For $r=1$ we conclude that $\cE_1^\prime=0$ or $\cE_1^\pprime=0$, showing (5).

(2) and (3) are easy:  

If one of the maps $\phi_i^\pprime\colon \cE_{i}^\pprime \to \cE_{i-1}^\pprime$ is an isomorphism then the map $$\underline{\quad}\circ \phi_i^\pprime\colon \cHom(\cE_{i-1}^\pprime,\cE_{i-1}^\prime) \to \cHom(\cE_{i}^\pprime,\cE_{i-1}^\prime)$$ is an isomorphism.

In particular the acyclic complex $\cHom(\cE_{i-1}^\pprime,\cE_{i-1}^\prime) \to \cHom(\cE_{i}^\pprime,\cE_{i-1}^\prime)$ maps injectively into $$[\oplus \cHom({\cE}^\pprime_i,{\cE}^\prime_i) \map{b=\oplus [\quad,\phi_i]} \oplus \cHom({\cE}^\pprime_i,{\cE}^\prime _{i-1})])$$
and the quotient is the complex defined by the pair $\cE_{\widehat{i-1}}^\pprime$, $\cE_{\widehat{i-1}}^\prime$.

Analogously if one of the maps $\phi_i^\prime\colon \cE^\prime_{i} \to \cE^\prime_{i-1}$ is an isomorphism, then the map  $$\cHom(\cE_{i}^\pprime,\cE_{i}^\prime) \to \cHom(\cE_{i}^\pprime,\cE_{i-1}^\prime)$$ is an isomorphism, and we can proceed as above, this time removing the $i$th entry of the two chains.

Now (1) follows, because this holds for $r=0$ and $r=1$ and by (4),(5) there always exists an $i$ such that one of the morphisms occurring in the chains is an isomorphism and this allows to shorten the chain by (2) and (3), so that the claim follows by induction.
\end{proof}

\begin{corollary}\label{maximalSummands}
Let $\cE_\bullet^\pprime$, $\cE_\bullet^\prime$  be a maximal pair of chains of ranks $\un{n}^\pprime$ and $\un{n}^\prime$.
Suppose that the sets $I^\prime :=\{i | n_i^\prime \neq 0\}$,$I^\pprime:=\{i | n_i^\pprime \neq 0\}$  and $I^\prime\cup I^\pprime=[0,r]\cap \bZ$ are strings of consecutive integers.
Then one of the following holds:
\begin{enumerate}
\item There exists $0\leq j\leq r$ such that $I^\prime=\{ i | i>j \}$ and $I^\pprime = \{ i | i \leq j\}$ 
\item $\cE_r^\prime=0$. Moreover in this case let $k$ be minimal, such that $\cE_i^\prime=0$ for $k\leq i$ and $l$ be the maximal integer such that $\cE_i^\prime=0$ for $i<l$. Then $\cE^\pprime_\bullet$ contains a direct summand isomorphic to the chain 
$$\cE_r^\pprime \to \dots \to \cE_k^\pprime =\dots =\cE_k^\pprime \to \cE_{l-1}^\pprime \to \dots \cE_0^\pprime.$$
\item $\cE_0^\pprime=0$. Moreover in this case let $l$ be maximal , such that $\cE_i^\pprime=0$ for $i\leq l$ and $k$ be the minimal integer such that $\cE_i^\pprime=0$ for $i>k$. Then $\cE^\prime_\bullet$ contains a direct summand isomorphic to the chain 
$$\cE_r^\prime \to \dots \to \cE_{k+1}^\prime \to \cE_l^\prime =\dots =\cE_l^\prime \to \cE_{l-1}^\prime \to \dots \cE_0^\prime.$$
\end{enumerate}
\end{corollary}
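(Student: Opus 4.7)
The plan is to induct on $r$, using Lemma~\ref{maximalPairs} as the main tool. The base case $r=0$ reduces to the maximality condition $\Hom(\cE_0^{\pprime},\cE_0^{\prime})=0$; the interval hypothesis then forces one of the two bundles to vanish, which is conclusion (2) or (3). For the inductive step, I would first dispose of the disjoint case: if $I^\prime \cap I^\pprime = \emptyset$, two disjoint consecutive-integer intervals whose union is $[0,r]$ must be of the form $[0,j]$ and $[j+1,r]$ for some $j$, and a brief check of orientations (ruling out the ``wrong'' orientation using Lemma~\ref{maximalPairs}(1)) gives (1).

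If instead the intervals overlap, then Lemma~\ref{maximalPairs}(1) combined with the interval structure forces either $\cE^\prime_r = 0$ or $\cE^\pprime_0 = 0$. These two possibilities are interchanged by the duality used in the proof of Lemma~\ref{maximalPairs} (passing to the pair $(\cE^\pprime_\bullet, \cE^\prime_{\bullet-1})$), so I would treat the case $\cE^\prime_r = 0$ and target conclusion (2). Define $l,k$ as in the statement, so $I^\prime = [l, k-1]$, and invoke Lemma~\ref{maximalPairs}(4) (or (5) when $r=1$) to produce an index $i$ at which $\phi_i^\prime$ or $\phi_i^\pprime$ is an isomorphism.

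Applying Lemma~\ref{maximalPairs}(2) or (3) then shortens the pair by one column. After reindexing, the interval hypothesis persists and $\cE^\prime_{r-1} = 0$ continues to hold for the shortened chain, so the inductive hypothesis returns a (2)-type conclusion for it with reduced indices $l_{\text{red}} = l$ and $k_{\text{red}} = k-1$. The summand it provides has a middle section of repeated $\cE_k^\pprime$ of length $k - l - 1$; the isomorphism removed in the reduction step supplies the missing identity map at the remaining position, extending the middle section to length $k - l$ and producing exactly the summand described in (2).

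The hard part will be the bookkeeping: verifying that the interval hypothesis and the case-(2) configuration really persist under column removal, that the inductive conclusion does not accidentally fall into (1) or (3), and that the lifted subchain actually splits as a direct summand of $\cE^\pprime_\bullet$. This last point reduces to showing that at the boundary positions $l$ and $k$ of the middle range, the structural maps $\phi_l^\pprime$ and $\phi_k^\pprime$ admit the required splittings; this should follow from the local consequences of $b$ being an isomorphism, along the same lines as the ``surjective or injective at the support of the cokernel'' arguments already used in the proof of Lemma~\ref{maximalPairs}.
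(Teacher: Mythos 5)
Your overall plan (induct on $r$, use Lemma \ref{maximalPairs} to find a column where $\phi_i^\prime$ or $\phi_i^\pprime$ is an isomorphism, delete it via parts (2)/(3), and re-extend the summand) is essentially the paper's argument, but there is a genuine gap at the very first step, namely in establishing the trichotomy itself. In the overlapping case you assert that Lemma \ref{maximalPairs}(1) together with the interval structure forces $\cE_r^\prime=0$ or $\cE_0^\pprime=0$. It does not: a single application of part (1) only produces one vanishing bundle, and the interval hypothesis only propagates that vanishing to one end of the \emph{corresponding} chain, which may be the wrong end. Concretely, the configuration $I^\prime=[a,r]$, $I^\pprime=[0,b]$ with $0<a\le b$ (overlapping, $\cE_r^\prime\neq 0$, $\cE_0^\pprime\neq 0$, but $\cE_0^\prime=0$) is perfectly consistent with part (1), yet none of (1)--(3) of the corollary holds for it, so your case analysis silently skips exactly the configurations that have to be excluded. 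Excluding them is a real step: the paper does it by iterating part (1) on truncations of the pair, using that maximality is inherited by $(\cE_{\bullet>i}^\pprime,\cE_{\bullet>i}^\prime)$ when $\cE_j^\prime=0$ for all $j\le i$ (the Hom-complex splits off the terms involving the vanishing bundles). One truncation then forces $\cE_j^\pprime=0$ for $j>b$, and a second truncation to the window strictly between the two vanishing ranges produces a maximal pair all of whose bundles are nonzero, which part (1) forbids unless that window is empty, i.e. unless the supports are in fact disjoint and in the orientation of case (1). A similar remark applies to your disjoint case: the ``wrong'' orientation is not ruled out by part (1), which holds there vacuously; you must instead observe that the complex degenerates to $0\to\cHom(\cE_{j+1}^\pprime,\cE_j^\prime)\neq 0$, contradicting maximality (or run a shortening argument).

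Apart from this, the inductive construction of the summand is workable and coincides with the paper's, but your bookkeeping claim $l_{\mathrm{red}}=l$, $k_{\mathrm{red}}=k-1$ is only correct when the deleted column lies in the middle range $[l,k-1]$; the isomorphism provided by part (4) may well sit outside that range, in which case the reduced indices behave differently and the re-insertion step is the trivial one (summands extend across an inserted isomorphism, as in the $\phi_i^\pprime$ case). The paper sidesteps part of this by first restricting to the window $l\le\bullet\le k$, where the target summand is the constant chain on $\cE_k^\pprime$, proving the splitting there (extending the splitting map across a deleted column by composing with $\phi_i^\pprime$), and then gluing back with the identity outside the window; you should either do the same or carry out the case distinction explicitly.
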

\begin{proof}
Let us first prove that one of the given conditions occurs:  
From Lemma \ref{maximalPairs} we know that there exists an $i$ such that one of the bundles $\cE_i^\prime, \cE_i^\pprime$ is $0$. If some $\cE_i^\prime=0$, let us choose $i$ to be maximal with this property. By our assumption $\cE_j^\prime=0$ either for all $j\geq i$ in which case $\cE_r^\prime=0$, which is case (2),  or $\cE_j^\prime=0$  for all $j \leq i$. In this case the pair $\cE^\pprime_{\bullet>i},\cE_{\bullet>i}^\prime$ is again maximal so again one of the bundles occurring in this pair must be $0$, so by construction there now exists a minimal $j>i$  with $\cE_j^\pprime=0$.  Since $I^\prime\cup I^\pprime=[0,r]$ we must then have $\cE_k^\pprime =0$ for all $k\geq j$, but in this case the truncated pair of chains  $\cE^\pprime_{j>\bullet>i},\cE_{j>\bullet>i}^\prime$ will again be maximal and contains only non-zero bundles, so $j=i+1$ and the chain is of the type described in (1).

The same argument shows that if some $\cE_i^\pprime=0$ then either $\cE_0^\pprime=0$ or the pair is of the type described in (1). Thus one of the conditions listed in the corollary holds.

Suppose now  that $\cE_r^\prime=0$ and write $I^\prime =[m,k-1]$ with $r\geq k>l\geq 0$. To show that $\cE_\bullet^\pprime$ contains a direct summand of the given form we proceed by induction on $r$. For $r=0$ there is nothing to show and for $r=1$ Lemma \ref{maximalPairs} shows that $\phi_1^\pprime$ must be an isomorphism. 

In general, we may reduce to the case $[m,k-1]=[0,r-1]$, because a direct summand of the truncated chain $\cE^\pprime_{m\leq \bullet\leq k}$ that contains $\cE_k^\pprime$ defines one of $\cE_\bullet^\pprime$ and the maximality of the pair also holds for the truncated chain since $\cE_i^\prime=0$ for $i\not\in I^\prime$.

By Lemma \ref{maximalPairs} there exists $i$ such that either $\phi_i^\prime$ or $\phi_i^\pprime$ is an isomorphism. In the first case the pair $\cE_{\widehat{i}}^\pprime$, $\cE_{\widehat{i}}^\prime$ is again maximal and again satisfies that the bundle with the largest index in  $\cE_{\widehat{i}}^\prime$ is $0$, so by induction $\cE_{\widehat{i}}^\pprime$  contains a direct summand of the form described in (2).

Also we know $i<r$ because we assumed $\cE_{r-1}^\prime\neq 0$ so that $\phi_r^\prime$ is not an isomorphism.

Since we found a direct summand of the form given in (2) in the chain $\cE_{\widehat{i}}^\pprime$ we know that the composition $\phi_{r0}^\pprime$ is an injective, so that the map $\phi_{ri}^\pprime\colon \cE_r^\pprime \to \cE_i^\pprime$ must be injective and composing this map with $\phi_{i}$ we find a splitting of the inclusion of the subchain 
$$\cE_r^\pprime = \dots = \cE_r^\pprime $$
of $\cE_\bullet^\pprime$. 

Similarly if $\phi_i^\pprime$ is an isomorphism the pair $\cE_{\widehat{i-1}}^\pprime$, $\cE_{\widehat{i-1}}^\prime$ is maximal and satisfies $\cE_r^\prime=0$, so again we find a direct summand in $\cE_{\widehat{i-1}}^\pprime$, which extends to a direct summand in $\cE_\bullet$, because direct summands always extend if one adds an isomorphism into a chain.

If $\cE_0^\pprime=0$ an argument dual to the above gives the result.
\end{proof}

The summands occurring in Corollary \ref{maximalSummands} are isomorphic to one of the canonical subchains or quotients used in \cite[Proposition 4]{GPH} to give necessary conditions for the existence of semistable chains. Since we will use them let us briefly recall the definition of these chains:

Given a chain $\cE_\bullet$ and a stability parameter $\alpha\in \bR^{r+1}$ satisfying $\alpha_{i+1} > \alpha_i$ for all $i$ we will call the following chains the {\em canonical test chains} for the existence of semistable chains:
\begin{enumerate}
\item For $0\leq i <r$ the chain $\cE_{\bullet\leq i} :=(0\to \dots \to 0 \to \cE_i \to \cE_{i-1} \to \dots \cE_0)$.
\item If there exist $0\leq l<k \leq r$ such that for all $i$ with $l\leq i <k$ we have $n_i \geq n_k$, the chain 
$$ S_{[lk]}(\cE_\bullet) = (\cE_r \to \dots \to \cE_k \map{id} \dots \map{id} \cE_k \to \cE_{l-1} \to \dots \to \cE_0).$$
\item If there exist $0\leq l<k \leq r$ such that for all $i$ with $l\leq i <k$ we have $n_i \geq n_l$, the chain
 $$ Q_{[lk]}(\cE_\bullet)=(\cE_r \to \dots \to \cE_{k+1} \to \cE_l \map{id} \dots \map{id} \cE_l \to \cE_{l-1} \to \dots \to \cE_0).$$
\end{enumerate}
The chains $\cE_{\bullet\leq i}$ are canonical subchains of $\cE_\bullet$, the chains $ S_{[lk]}(\cE_\bullet)$ map to $\cE_\bullet$ and $\cE_\bullet$  maps to the chains $Q_{[lk]}(\cE_\bullet) $.
By \cite[Proposition 4]{GPH} we know that $\alpha$-semistable chains can only exist if:
\begin{enumerate}
\item  $\mu_\alpha( \cE_{\bullet\leq i} ) \leq \mu_{\alpha}(\cE_\bullet)$, 
\item $\mu_\alpha( S_{[lk]}(\cE_\bullet) ) \leq \mu_{\alpha}(\cE_\bullet)$ and
\item $ \mu_{\alpha}(\cE_\bullet) \leq \mu_{\alpha}(Q_{[lk]}(\cE_\bullet))$
\end{enumerate}
for all possible choices of $i,l,k$.

\subsection{Proof of Theorem \ref{irreducible}}	

Let us denote by $\bR^{r+1}_{>\Higgs}\subset \bR^{r+1}$ the subset defined by  $\alpha>\alpha_{\Higgs}$. The irreducibility of moduli spaces of $\alpha$-semistable chains will follow from the following Proposition:
\begin{proposition}\label{path}
For any $\un{n},\un{d}$ and $\alpha\in \bR^{r+1}_{>\Higgs}$ there exists $\alpha_\infty\in \bR^{r+1}_{>\Higgs}$ and a path $\gamma\colon \alpha \to \alpha_\infty$ that is linear in a neighborhood of any critical point lying on $\gamma$, such that 
\begin{enumerate}
\item if $\un{n}=(m,\dots,m)$ is constant then $\Chain_{\un{n}}^{\un{d},\alpha_\infty-\sst}$ is irreducible and for none of the critical values in $\gamma$ a maximal HN-stratum occurs in the wall crossing decomposition.
\item if $\un{n}$ is not constant then $\alpha_\infty$ is a critical value, such that $\Chain_{\un{n}}^{\un{d},\alpha_\infty^+}=\emptyset$  and $\Chain_{\un{n}}^{\un{d},\alpha_\infty}$ contains a unique maximal $\alpha^-_\infty$-HN-stratum. Moreover for none of the other critical values in $\gamma$ maximal HN-strata occur in the wall crossing decomposition.
\end{enumerate}
\end{proposition}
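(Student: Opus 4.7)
The plan is to build $\gamma$ as a piecewise-linear path in $\bR^{r+1}_{>\Higgs}$, with a generic choice of direction on each segment, whose endpoint $\alpha_\infty$ is either an explicit parameter at which irreducibility is verified directly (case~1) or the first wall where the moduli becomes empty on one side (case~2). The governing combinatorial input is Corollary \ref{maximalSummands}: for any maximal pair $(\cE^\prime_\bullet,\cE^{\prime\prime}_\bullet)$ one of the two chains contains a direct summand whose rank-degree profile matches that of a canonical test chain $\cE_{\bullet\le i}$, $S_{[lk]}$, or $Q_{[lk]}$ from \cite[Proposition 4]{GPH}. Hence the set of \emph{dangerous} walls---those on which a maximal HN-stratum of rank $\un{n}$ and degree $\un{d}$ can appear---is the finite arrangement of hyperplanes in $\bR^{r+1}$ defined by $\mu_\alpha(T)=\mu_\alpha(\un{n},\un{d})$ as $T$ ranges over the canonical test chains with $\un{n}^T<\un{n}$.

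For case~(2), $\un{n}$ non-constant, at least one canonical test chain $T$ produces an $\alpha$-dependent equation: otherwise the inequalities of \cite[Proposition 4]{GPH} are either automatic or independent of $\alpha$, and a quick inspection shows that the $\alpha$-dependent ones occur as soon as two distinct rank-entries appear in $\un{n}$. Choose $\delta$ so that along $\alpha+t\delta$ the quantity $\mu_\alpha(\un{n},\un{d})-\mu_\alpha(T)$ (or its sign for $Q_{[lk]}$) decreases strictly, and perturb $\delta$ generically so that no two dangerous walls are encountered simultaneously and no dangerous wall is tangent to the path. Set $\alpha_\infty$ to be the first intersection of the path with the chosen dangerous wall $W_T$. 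Beyond $\alpha_\infty$ the \cite[Proposition 4]{GPH} inequality for $T$ is violated, forcing $\Chain_{\un{n}}^{\un{d},\alpha_\infty^+-\sst}=\emptyset$. Intermediate crossings are non-dangerous by genericity, so by Corollary \ref{maximalSummands} no maximal HN-stratum arises at them; and at $\alpha_\infty$ itself the HN-type built from $T$ and its complement is, again by the corollary, the unique one whose stratum has dimension equal to $\dim \Chain_{\un{n}}^{\un{d},\alpha-\sst}$.

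For case~(1), with $\un{n}=(m,\dots,m)$ constant, the walls defined by $S_{[lk]}$ and $Q_{[lk]}$ reduce to $\alpha$-independent inequalities on $\un{d}$ (their rank profile equals $\un{n}$), so the only $\alpha$-dependent dangerous walls come from the subchains $\cE_{\bullet\le i}$. Pick a direction $\delta$ so that $\alpha+t\delta$ eventually strictly satisfies the subchain tests for every $i$, and set $\alpha_\infty$ inside the resulting terminal chamber. In this chamber the strict asymmetry of $\alpha_\infty$ combined with the constant-rank rigidity forces every $\alpha_\infty$-semistable chain into a single generic rank profile of the maps $\phi_i$; the stack of chains with that profile fibres over an irreducible stratum of $\prod \Bun_{n_i}^{d_i}$ via affine spaces of Hom-data, yielding irreducibility of $\Chain_{\un{n}}^{\un{d},\alpha_\infty-\sst}$. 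A generic choice of $\delta$ once more ensures that no intermediate wall met by $\gamma$ is dangerous.

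The main obstacle is the explicit description at $\alpha_\infty$ in case~(1): one needs to show that the terminal chamber singles out one generic rank profile for the $\phi_i$ and that the corresponding substack of chains is irreducible, which will likely proceed by induction on $r$ in the same spirit as \cite{BGPG} and \cite{AGPS}. By contrast, the genericity of $\delta$ within the finite hyperplane arrangement of dangerous walls, and the computation identifying the unique top-dimensional HN-type in case~(2), are routine once the combinatorics provided by Corollary \ref{maximalSummands} are in place.
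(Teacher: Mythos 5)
Your overall architecture matches the paper's (a path to an empty or known-irreducible chamber, with Corollary \ref{maximalSummands} plus \cite[Proposition 4]{GPH} controlling maximal HN strata), but the mechanism you use at intermediate crossings has a genuine gap. You propose to avoid the ``dangerous'' walls by a generic choice of $\delta$, but these walls are hyperplanes: if one of them separates $\alpha$ from the terminal chamber (case 1) or from your chosen wall $W_T$ (case 2), no generic perturbation of the direction avoids crossing it --- genericity only avoids codimension-two loci, i.e.\ it guarantees that each critical value lies on a single wall. Moreover, your identification of the dangerous walls with the test-chain arrangement presupposes that Corollary \ref{maximalSummands} applies, and its hypothesis (consecutive supports of the destabilizing subquotients) is exactly what one extracts from the single-wall condition with respect to the \emph{full} locally finite wall arrangement, not just your dangerous sub-arrangement. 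The workable replacement, which is what the paper does, is the implication: if a maximal HN stratum occurs at a critical value lying on a single wall, then a canonical test chain destabilizes numerically, so $\Chain_{\un{n}}^{\un{d},\alpha_0^+-\sst}=\emptyset$ by \cite[Proposition 4]{GPH}; hence maximal strata cannot occur at any crossing where the moduli space stays nonempty, and there is no need to avoid those walls at all. In case (1) this is essential, since the subchain walls may well separate $\alpha$ from the terminal chamber.

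Two further steps are asserted rather than proved. In case (2) you must check that the path stays inside $\bR^{r+1}_{>\Higgs}$ (otherwise the vanishing of $\bH^2$ and the dimension formulas underlying ``maximal'' fail) and that it actually reaches a wall beyond which a test inequality of \cite[Proposition 4]{GPH} is violated; $\alpha$-dependence of the inequality alone does not give reachability inside the cone, and this is precisely \cite[Lemma 8]{GPH}, which the paper invokes. The uniqueness of the maximal HN stratum at $\alpha_\infty$ is also not routine: one must show that the canonical summand produced by Corollary \ref{maximalSummands} is the whole first HN step and that the filtration has length one; the paper does this by comparing the normals $\frac{1}{n}\un{n}-\frac{1}{n_{\cG}}\un{n}_{\cG}$ and $\frac{1}{n}\un{n}-\frac{1}{n_{\cQ}}\un{n}_{\cQ}$ of the walls defined by a putative complementary summand, which could only be proportional with a negative factor and would then destabilize on opposite sides of the wall --- a contradiction. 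Finally, in case (1) the irreducibility of the terminal moduli space, which you yourself flag as the main obstacle, is exactly \cite[Lemma 9]{GPH} for the ray $\alpha+t(0,1,\dots,r)$; without it (or a substitute argument) the induction driving Theorem \ref{irreducible} cannot start, so as written the proposal is incomplete at this point as well.
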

\begin{proof}
By \cite[Lemma 8]{GPH} for non-constant $\un{n}$ there exists a line in $\bR^{r+1}_{>\Higgs}$ such that $\Chain_{\un{n}}^{\un{d},\alpha_\infty^+-\sst}$ is empty. Since for given $\un{n},\un{d}$ the walls defined by the critical parameters define a locally finite partition of $\bR^{r+1}$ \cite[Section 2.4]{AGPS} we can modify this path such that every critical value along the path lies on a single wall.

Note that the walls defined by the different canonical test chains are pairwise different, because the canonical test chains $\cF_\bullet$ are determined by the sets of indices for which $\cF_i$ is equal to $\cE_i$, i.e. the set of indices for which $\cE_i/\cF_i=0$ and the set of indices for which $\cF_i=0$. If we denote by $\un{m}:=\rk(\cE_\bullet/\cF_\bullet)$ and $m:= \sum m_i$ the wall is perpendicular to the vector $\frac{1}{n} \un{n} - \frac{1}{m} \un{m}$. Two such vectors can only be collinear if the sets of indices for which $m_i\neq 0$ are complementary, which does not happen for the canonical test chains.

Let $\alpha_0$ be a critical value lying on a single wall, such that  $\Chain_{\un{n}}^{\un{d},\alpha_0^+-\sst}$ contains a maximal Harder-Narasimhan stratum. Let $\cE_\bullet$ be a chain in this stratum and denote by $\cF_\bullet^j$ its Harder-Narasimhan filtration.
Then for all subquotients $\cF_\bullet^j/\cF_\bullet^k$ occurring in this stratum we know that the set $I^j=\{ i | \rk(\cF_i^j(\cF_i^k)\neq 0\}$ is a string of consecutive integers, because otherwise the subquotient would be a direct sum of chains defining different walls, which cannot happen because we assumed that $\alpha_0$ lies on a single wall.

Therefore we can apply Corollary \ref{maximalSummands} to the maximal pair $(\cE_\bullet/\cF_\bullet^1,\cF^1_\bullet)$ and conclude that for $\Chain_{\un{n}}^{\un{d},\alpha_0^+-\sst}$ one of the canonical test chains is destabilizing, so that by \cite[Proposition 4]{GPH}  $\Chain_{\un{n}}^{\un{d},\alpha_0^+-\sst}$ is empty. 

Finally, we claim that only a single maximal Harder-Narasimhan stratum occurs in $\Chain_{\un{n}}^{\un{d},\alpha_0^+-\sst}$ and that this is a Harder-Narasimhan stratum of a type given by a filtration of length 1 defined by one of the canonical test chains. To see this we note that by Corollary \ref{maximalSummands} $\cF_\bullet^1$ or $\cE_\bullet/\cF_\bullet^1$ contains a direct summand isomorphic to a canonical test chain. 

If $\cF^1_\bullet$ contains the summand $\cG_\bullet$ then $\cQ_\bullet=\cF_\bullet^1/\cG_\bullet$ is a chain of shorter length. Denote by $\un{n}_{\cG}:=\rk(\cG_\bullet)$  and $\un{n}_{\cQ}:=\rk(\cF_\bullet^1/\cG_\bullet)$ and by $n_{\cG}$, resp. $n_{\cQ}$ the rank of $\oplus \cG_i$ and $\oplus \cQ_i$. The wall defined by $\mu_\alpha(\cE_\bullet)=\mu_\alpha(\cG)$ is by definition perpendicular to $\frac{1}{n} \un{n} - \frac{1}{n_{\cG}} \un{n}_{\cG}$ and the wall defined by $\cQ_\bullet$ is perpendicular to $\frac{1}{n} \un{n} - \frac{1}{n_{\cQ}} \un{n}_{\cQ}$. By assumption these walls have to coincide because $\alpha_0$ lies on a single wall. But since there exists $i$ such that $n_{i,\cQ} =0$ and $n_{i,\cG}=n_i$ this can only happen if the  two vectors differ by a negative scalar. 
However this would imply that $\cQ_\bullet$ and $\cG_\bullet$ are destabilizing on different sides of the wall, contradicting our assumption that $\cF^1_\bullet$ was the first step of the Harder-Narasimhan flag. Thus $\cG_\bullet=\cF^1_\bullet$. Moreover, in this case the quotient $\cE_\bullet/\cF_\bullet^1$ is a shorter chain. If the Harder-Narasimhan filtration of such a quotient was non-trivial, we could repeat the argument and find a subquotient of $\cE_\bullet$ of even shorter length, still defining the same wall, which cannot happen. 

The same argument applied to $\cE_\bullet/\cF_\bullet^1$ implies that a canonical direct summand of $\cE_\bullet/\cF_\bullet^1$ must be the maximally destabilizing quotient of $\cE_\bullet$, since again if $\cE_\bullet/\cF_\bullet^1=\cQ_\bullet \oplus \cG_\bullet$ we find that the two summands must be destabilizing on different sides of the wall. However, the canonical summand can only be destabilizing for $\alpha_0^+$  if we chose $\gamma$ to end at the first intersection with a wall defined by one of the canonical test chains. Thus the summand must be the maximally destabilizing quotient of $\cE_\bullet$ and then we can conclude as above that it must define the only step in the Harder-Narasimhan filtration.

If $\un{n}=(n,\dots,n)$ is constant the argument is simpler: In this case the only canonical test chains are given by subchains of rank $(n,\dots,n,0,\dots,0)$ and by \cite[Lemma 9]{GPH} for the line $\alpha_t := \alpha + t(0,1,\dots,r)$ we know that $\Chain_{\un{n}}^{\un{d},\alpha_t-\sst}$ is irreducible for $t\gg 0$. Moreover if a maximal Harder-Narasimhan stratum with subchains of rank $(n,\dots,n,0,\dots,0)$ would occur, then all chains in $\Chain_{\un{n}}^{\un{d}}$ would be $\alpha$-unstable.
\end{proof}

\begin{proof}[Proof of Theorem 2]
This now follows by induction: A Harder-Narasimhan stratum is irreducible if and only if the stacks of stable chains parametrizing the associated graded chains are irreducible. By induction on the rank of the chains this holds true.  Thus choosing a path $\gamma$ as in Proposition \ref{path} we find that  the stacks $\Chain_{\un{n}}^{\un{d},\alpha_\infty-\sst}$ contain a unique irreducible component of dimension $\dim(\Chain_{\un{n}}^{\un{d},\alpha-\sst})$. Since no other maximal Harder-Narasimhan strata occur in the wall crossing formula for critical points in the path $\gamma$, the smooth, equidimensional stack $\Chain_{\un{n}}^{\un{d},\alpha-\sst}$ also contains a unique component of maximal dimension, so it must be irreducible.
\end{proof}

\end{document}